\newtheorem {Lemma}{Lemma}[section]
\newtheorem {Theorem} {Theorem}[section]
\newtheorem{Proposition}{Proposition}[section]
\numberwithin{equation}{section}
\begin{document}

\title{Sharp bounds for ordinary and signless Laplacian spectral radii of uniform hypergraphs}

\author{Hongying Lin\footnote{E-mail: lhongying0908@126.com}, Biao Mo\footnote{E-mail: 172895568@qq.com}, Bo Zhou\footnote{Corresponding author. E-mail: zhoubo@scnu.edu.cn}, Weiming Weng\footnote{E-mail: jshwwm@163.com}\\
School of  Mathematical Sciences, South China Normal University,\\
Guangzhou 510631, P.R. China}

\date{}
\maketitle

\begin{abstract}

We give sharp upper bounds for the ordinary spectral radius and signless Laplacian spectral radius of a uniform hypergraph in terms of the average $2$-degrees or degrees of vertices, respectively, and we also give a lower  bound for the ordinary spectral radius. We also compare these bounds with known ones.
 \\ \\
{\bf Key words:} tensor, eigenvalues of tensors, uniform hypergraph, average $2$-degree, adjacency tensor, signless Laplacian tensor
\end{abstract}

\section{Introduction}

For positive integers $k$ and $n$ with $k\le n$, a tensor $\mathcal{T}=(T_{i_1\dots i_k})$ of order $k$ and dimension $n$ refers to a multidimensional array
with complex entries $T_{i_1\dots i_k}$ for $i_j \in [n]:=\{1,\dots, n\}$ and  $j\in[k]$.
Obviously, a  vector is a  tensor of order $1$ and a matrix is a tensor of order $2$.

Let $\mathcal{M}$  be a  tensor of order $s\geq 2$  and  dimension $n$, and $\mathcal{N}$ a tensor of order $k\ge 1$ and  dimension $n$.
The product $\mathcal{M}\mathcal{N}$ is the tensor of order $(s-1)(k-1)+1$ and dimension $n$ with entries \cite{Sh}
\[
(\mathcal{M}\mathcal{N})_{ij_{1}\dots j_{s-1}}=\sum_{i_{2},\dots ,i_{s}\in [n]}M_{ii_{2}\dots i_{s}}N_{i_{2}j_{1}}\cdots N_{i_{s}j_{s-1}},
\]
with $i\in [n]$ and $j_{1},\dots ,j_{s-1}\in [n]^{k-1}$.

For a tensor $\mathcal{T}$ of order $k\ge 2$ and dimension $n$ and  a vector $x=(x_1,\dots, x_n)^\top$,  $\mathcal{T}x$ is an $n$-dimensional vector whose $i$-th entry is
\[(\mathcal{T}x)_i=\sum_{i_2,\dots ,i_k\in [n]} T_{ii_2\dots i_k}x_{i_2}\cdots x_{i_k},\]
where $i\in[n]$. Let $x^{[r]}=(x_1^r,\dots,x_n^r)^\top$.
For some complex $\rho$, if there is a nonzero $n$-dimensional vector $x$ such that
\[\mathcal{T}x=\rho x^{[k-1]},\]
then $\rho$ is called an eigenvalue of $\mathcal{T}$, and $x$ an eigenvector of $\mathcal{T}$ corresponding to $\rho$, see \cite{Q1,Qi}.
Let $\rho(\mathcal{T})$ be the largest modulus of the eigenvalues of $\mathcal{T}$.

Let $\mathcal{G}$ be a hypergraph with vertex set $V(\mathcal{G})=[n]$ and edge set $E(\mathcal{G})$, see~\cite{Be}.
If every edge of $\mathcal{G}$ has cardinality $k$,
then we say that $\mathcal{G}$ is a $k$-uniform hypergraph.
Throughout this paper, we consider $k$-uniform hypergraphs on $n$ vertices with $2\le k\le n$. A uniform hypergraph is a hypergraph that is $k$-uniform
for some $k$.
For $i\in [n]$,  $E_i$ denotes the set of edges of $\mathcal{G}$ containing $i$.
The degree of a vertex $i$ in $\mathcal{G}$ is defined as $d_i=|E_i|$. If $d_i=d$ for $i\in V(\mathcal{G})$, then $\mathcal{G}$ is called a regular hypergraph (of degree $d$).
For $i,j\in V(\mathcal{G})$, if there is a sequence of edges $e_1,\dots,e_r$ such that $i\in e_1$, $j\in e_r$ and $e_s\cap e_{s+1}\neq \emptyset $ for all $s\in[r-1]$, then we say that $i$ and $j$ are connected.  A hypergraph is  connected if every pair of different vertices of $\mathcal{G}$ is connected.

The adjacency tensor of a  $k$-uniform hypergraph $\mathcal{G}$ on $n$ vertices is defined as the  tensor $\mathcal{A}(\mathcal{G})$ of order $k$ and dimension $n$  whose $(i_1\dots i_k)$-entry is
\[A_{i_1 \dots i_k}= \begin{cases}
\frac{1}{(k-1)!} & \text{if } \{i_1, \dots, i_k\}\in E(\mathcal{G}),\\
0  & \text{otherwise}.
\end{cases}\]
Let $\mathcal{D}(\mathcal{G})$ be the diagonal tensor of order $k$ and dimension $n$ with its diagonal entry $D_{i\dots i}$ the degree of vertex $i$ for $i\in[n]$.
Then $\mathcal{Q}(\mathcal{G})=\mathcal{D}(\mathcal{G})+\mathcal{A}(\mathcal{G})$ is the signless Laplacian tensor of  $\mathcal{G}$.
We call $\rho(\mathcal{A}(\mathcal{G}))$ the (ordinary) spectral radius of $\mathcal{G}$, which is denoted by $\rho(\mathcal{G})$, and $\rho(\mathcal{Q}(\mathcal{G}))$ the signless Laplacian spectral radius of $\mathcal{G}$, which is denoted by $\mu (\mathcal{G})$.

For a nonnegative tensor $\mathcal{T}$ of order $k\ge 2$ and dimension $n$, the $i$-th row sum of $\mathcal{T}$ is $r_i(\mathcal{T})=\sum_{i_2,\dots,i_k\in [n]}T_{ii_2\dots i_k}$. If $r_i(\mathcal{T})>0$, then the $i$-th average $2$-row sum of $\mathcal{T}$ is defined as
\[m_i(\mathcal{T})=\frac{\sum_{i_2,\dots,i_k\in [n]}T_{ii_2\dots i_k}r_{i_2}(\mathcal{T})\cdots r_{i_k}(\mathcal{T})}{r_i^{k-1}(\mathcal{T})}.\]

Let $\mathcal{G}$ be a  $k$-uniform  hypergraph on $n$ vertices. Let $\mathcal{A}=\mathcal{A}(\mathcal{G})$. For $i\in V(\mathcal{G})$ with $d_i>0$,
\begin{eqnarray*}
m_i(\mathcal{A})&=&\frac{\sum_{i_2,\dots,i_k\in [n]}A_{ii_2\dots i_k}r_{i_2}(\mathcal{A})\cdots r_{i_k}(\mathcal{A})}{r_i^{k-1}(\mathcal{A})}\\
&=&\frac{\sum_{\{i,i_2,\dots,i_k\}\in E_i}d_{i_2}\cdots d_{i_k}}{d_i^{k-1}},
\end{eqnarray*}
which is called the average $2$-degree of vertex $i$ of $\mathcal{G}$ (average of degrees of vertices in $E_i$) \cite{YZL}.

For a $k$-uniform hypergraph $\mathcal{G}$ with maximum degree $\Delta$, we know that $\rho(\mathcal{G})\le \Delta$ \cite{CD} and $\mu(\mathcal{G})\le 2\Delta$ \cite{Qi} with either equality when $\mathcal{G}$ is connected if and only if $\mathcal{G}$ is regular (see \cite{QSW}). Recently, upper bounds for $\rho(\mathcal{G})$ and $\mu(\mathcal{G})$ are given in \cite{YZL} using degree sequence. In this note, we present  sharp upper bounds for $\rho(\mathcal{G})$ and $\mu(\mathcal{G})$  using average $2$-degrees or degrees, and we also give a lower bound for $\rho(\mathcal{G})$. We compare these bounds with known bounds by examples.

\section{Preliminaries}

A nonnegative tensor $\mathcal{T}$ of order $k\ge 2$ dimension $n$ is called weakly irreducible if the associated  directed graph $D_\mathcal{T}$ of $\mathcal{T}$ is strongly connected, where $D_\mathcal{T}$ is the directed graph with vertex set $\{1, \dots, n\}$ and arc set $\{(i,j): a_{ii_2\dots i_k}\ne 0 \mbox{ for some } i_s=j \mbox{ with } s=2, \dots, k\}$ \cite{FGH,Qi}.

For an $n$-dimensional real vector $x$, let $\|x\|_k=(\sum_{i=1}^n|x_i|^k)^{\frac{1}{k}}$, and  if $\|x\|_k=1$, then we say that $x$ is a unit vector. Let $\mathbb{R}^n_{+}$ be the set of  $n$-dimensional nonnegative vectors.

\begin{Lemma}\label{irreducibility}\cite{FGH,YY}
%
Let $\mathcal{T}$ be a  nonnegative tensor. Then $\rho(\mathcal{T})$ is an eigenvalue of $\mathcal{T}$ and there is a unit nonnegative eigenvector corresponding to $\rho(\mathcal{T})$.
If furthermore $\mathcal{T}$ is weakly irreducible, then there is
a unique unit positive eigenvector corresponding to $\rho(\mathcal{T})$.

\end{Lemma}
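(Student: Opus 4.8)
The statement is the Perron--Frobenius theorem for nonnegative tensors, so my plan is to adapt the classical matrix proof to the homogeneous, degree-$(k-1)$ setting, using as the main technical device the nonlinear Collatz--Wielandt bounds, which I would record first as a preliminary lemma (provable directly from the homogeneity together with a short compactness argument, independently of the existence claim): for every $y\in\mathbb{R}^n_+\setminus\{0\}$ one has $\rho(\mathcal{T})\ge\min\{(\mathcal{T}y)_i/y_i^{k-1}:y_i>0\}$, while for every $y$ with all $y_i>0$ one has $\rho(\mathcal{T})\le\max_i (\mathcal{T}y)_i/y_i^{k-1}$. Both exploit the common degree-$(k-1)$ homogeneity of $\mathcal{T}x$ and $x^{[k-1]}$; the lower bound also yields the monotonicity $0\le\mathcal{S}\le\mathcal{T}\Rightarrow\rho(\mathcal{S})\le\rho(\mathcal{T})$. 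A consequence I will use repeatedly is that any strictly positive eigenvector forces its eigenvalue to equal $\rho(\mathcal{T})$, since then the above min and max coincide.

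For existence I would first treat a strictly positive tensor. On the compact set $\{x\in\mathbb{R}^n_+:\|x\|_k=1\}$ the map $x\mapsto(\mathcal{T}x)^{[1/(k-1)]}/\|(\mathcal{T}x)^{[1/(k-1)]}\|_k$ is continuous (positivity keeps $\mathcal{T}x$ off the boundary), so Brouwer's theorem gives a fixed point, i.e.\ a positive eigenvector, whose eigenvalue is automatically $\rho(\mathcal{T})$ by the last remark. For a general nonnegative $\mathcal{T}$ I would perturb by the all-ones tensor $\mathcal{J}$: each $\mathcal{T}_\varepsilon=\mathcal{T}+\varepsilon\mathcal{J}$ is positive with a unit positive eigenvector $x_\varepsilon$ and eigenvalue $\rho(\mathcal{T}_\varepsilon)$, and monotonicity makes $\rho(\mathcal{T}_\varepsilon)\downarrow\lambda_0\ge\rho(\mathcal{T})$ as $\varepsilon\downarrow0$. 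Compactness of the unit sphere provides $x_{\varepsilon_j}\to x^\ast$ with $\|x^\ast\|_k=1$, and passing to the limit in $\mathcal{T}_{\varepsilon_j}x_{\varepsilon_j}=\rho(\mathcal{T}_{\varepsilon_j})x_{\varepsilon_j}^{[k-1]}$ yields $\mathcal{T}x^\ast=\lambda_0(x^\ast)^{[k-1]}$; as $\lambda_0$ is now an eigenvalue of $\mathcal{T}$ we also have $\lambda_0\le\rho(\mathcal{T})$, whence $\lambda_0=\rho(\mathcal{T})$ and $x^\ast\ge0$ is the required nonnegative eigenvector.

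Now assume $\mathcal{T}$ weakly irreducible and nonzero, so $\rho(\mathcal{T})>0$. Let $x\ge0$ be the eigenvector just found and $P=\operatorname{supp}(x)$. Restricting the eigenequation to $P$ shows that all surviving terms use only indices in $P$, so $x|_P$ is a strictly positive eigenvector of the principal subtensor $\mathcal{T}[P]$ with eigenvalue $\rho(\mathcal{T})$, forcing $\rho(\mathcal{T}[P])=\rho(\mathcal{T})$; meanwhile each $i\notin P$ has no nonzero entry whose indices all lie in $P$. To conclude $P=[n]$ I must show that a weakly irreducible tensor cannot have a Perron eigenvector supported on a proper subset, equivalently that $\rho(\mathcal{T}[P])<\rho(\mathcal{T})$ strictly for every proper nonempty $P$. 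This strict spectral drop is the heart of the matter, and I expect it to be the main obstacle: a naive graded Collatz--Wielandt perturbation lifting the zero coordinates does raise the ratio at the boundary between $P$ and its complement, but it leaves the ratio unchanged in the interior of $P$, so the elementary argument stalls. The clean route is to phrase the eigenproblem as a fixed point of the order-preserving, degree-one homogeneous map $g(x)=(\mathcal{T}x)^{[1/(k-1)]}$ on the open positive cone, to observe that weak irreducibility is exactly irreducibility of $g$, and to invoke the nonlinear Perron--Frobenius theorem that an irreducible such map admits no eigenvector on the boundary of the cone.

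Given positivity, uniqueness of the unit positive eigenvector is comparatively clean, and I would prove it directly by the maximal-ratio argument. If $x,z>0$ both satisfy the eigenequation, each has eigenvalue $\rho(\mathcal{T})$ by the first paragraph; setting $t=\max_i x_i/z_i$, attained at some $i_0$, gives $x\le tz$ with equality at $i_0$. Comparing $(\mathcal{T}x)_{i_0}=\rho(\mathcal{T})x_{i_0}^{k-1}$ with $(\mathcal{T}(tz))_{i_0}=\rho(\mathcal{T})(tz_{i_0})^{k-1}=\rho(\mathcal{T})x_{i_0}^{k-1}$ and using monotonicity of $\mathcal{T}$ forces every nonzero entry in the $i_0$-th slice to have $x_{i_s}=t z_{i_s}$ at each of its indices; thus $x_j=tz_j$ for every out-neighbour $j$ of $i_0$ in $D_{\mathcal{T}}$, and strong connectivity propagates this to all vertices, giving $x=tz$. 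Normalizing in $\|\cdot\|_k$ then yields the unique unit positive eigenvector, completing the plan.
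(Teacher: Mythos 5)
The paper itself gives no proof of this lemma: it is quoted from \cite{YY} (the general nonnegative case) and \cite{FGH} (the weakly irreducible case), so your proposal must be measured against those arguments. Two of your three parts are sound and essentially reproduce the literature. The existence proof (Brouwer for positive tensors, then the perturbation $\mathcal{T}+\varepsilon\mathcal{J}$ and a compactness limit) is Yang--Yang's argument; the only repairs are minor: Brouwer needs a compact \emph{convex} domain, so work on the simplex $\{x\ge 0:\sum_i x_i=1\}$ rather than the $\ell^k$-sphere cap (or note they are homeomorphic), and your claim that the Collatz--Wielandt \emph{lower} bound is provable ``independently of the existence claim'' is doubtful --- in the literature it is a consequence of existence --- but fortunately your argument never needs it: the upper bound alone (which really is elementary: evaluate the eigenvalue equation at the coordinate maximizing $|x_i|/y_i$) gives both ``a positive eigenvector forces its eigenvalue to be $\rho(\mathcal{T})$'' and every monotonicity step you use, since in each such step the larger tensor already has a positive eigenvector in hand. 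Your uniqueness argument (maximal ratio, equality forced on all out-neighbours, propagation by strong connectivity) is correct and complete.

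The genuine gap is exactly where you predicted it, but it is worse than a deferred technicality: the statement you invoke to close it is false, and the statement you actually need cannot be invoked without circularity. Take $k=3$, $n=3$, $T_{123}=T_{231}=T_{312}=1$, all other entries $0$. The digraph $D_\mathcal{T}$ is strongly connected, so $\mathcal{T}$ is weakly irreducible, and $g(x)=(\mathcal{T}x)^{[1/2]}=(\sqrt{x_2x_3},\sqrt{x_3x_1},\sqrt{x_1x_2})$ is order-preserving, degree-one homogeneous and irreducible in your sense; yet $g(1,0,0)=(0,0,0)$, so $(1,0,0)$ is an eigenvector of $g$ (eigenvalue $0$) lying on the boundary of the cone. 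Thus ``an irreducible such map admits no eigenvector on the boundary'' fails for $k\ge 3$; this is precisely the way weak irreducibility for tensors is weaker than matrix irreducibility, where a boundary-supported eigenvector kills every entry leading out of its zero set. What \emph{is} true is that no eigenvector with eigenvalue $\rho(\mathcal{T})$ lies on the boundary --- but the only known proof of this runs your own maximal-ratio comparison against an already-existing positive eigenvector, i.e., it is a corollary of the theorem you are proving, so using it to show $\operatorname{supp}(x^\ast)=[n]$ is circular; equivalently, the strict drop $\rho(\mathcal{T}[P])<\rho(\mathcal{T})$ is asserted but never proved in your proposal. The correct repair is the one \cite{FGH} actually use: invoke the Gaubert--Gunawardena/Nussbaum theorem in its \emph{existence} form --- an order-preserving, degree-one homogeneous self-map of the open cone whose associated graph is strongly connected has an eigenvector \emph{inside} the open cone --- which hands you a positive eigenvector directly (its eigenvalue is $\rho(\mathcal{T})$ by your first-paragraph remark) and makes the support-of-$x^\ast$ detour unnecessary. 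As written, the hardest third of the lemma rests on a mis-stated citation.
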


\begin{Lemma} \cite{Qi} \label{QLQ}
Let $\mathcal{G}$ be a $k$-uniform hypergraph with $n$ vertices. Then
$\rho(\mathcal{G})=\max\{x^\top(\mathcal{A}(\mathcal{G})x): x\in\mathbb{R}^n_{+},\|x\|_{k}=1\}$.
\end{Lemma}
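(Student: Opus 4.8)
The plan is to read the objective $x^\top(\mathcal{A}(\mathcal{G})x)$ as the degree-$k$ homogeneous form $\sum_{i_1,\dots,i_k\in[n]}A_{i_1\dots i_k}x_{i_1}\cdots x_{i_k}$ attached to $\mathcal{A}=\mathcal{A}(\mathcal{G})$, set $\Lambda:=\max\{x^\top(\mathcal{A}x):x\in\mathbb{R}^n_{+},\|x\|_k=1\}$ (which is attained since the feasible set is compact and the form is continuous), and prove $\Lambda=\rho(\mathcal{G})$ by two opposing inequalities.

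For $\Lambda\ge\rho(\mathcal{G})$, I would invoke Lemma \ref{irreducibility}: $\rho(\mathcal{G})$ is an eigenvalue of $\mathcal{A}$ with a unit nonnegative eigenvector $x^{*}$, so $\mathcal{A}x^{*}=\rho(\mathcal{G})(x^{*})^{[k-1]}$. Left-multiplying by $(x^{*})^\top$ and using $\|x^{*}\|_k=1$ gives $(x^{*})^\top(\mathcal{A}x^{*})=\rho(\mathcal{G})\sum_i(x_i^{*})^k=\rho(\mathcal{G})$, and since $x^{*}$ is feasible, $\Lambda\ge\rho(\mathcal{G})$.

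For the reverse inequality $\Lambda\le\rho(\mathcal{G})$, let $x^{0}$ attain the maximum. Because $\mathcal{A}$ is symmetric, the gradient of the form equals $k\,\mathcal{A}x$, so the KKT conditions for maximizing over $\{x\ge 0,\|x\|_k=1\}$ give $(\mathcal{A}x^{0})_i=\Lambda(x_i^{0})^{k-1}$ for every $i$ in the support $U=\{i:x_i^{0}>0\}$ (the multiplier equals $\Lambda$ after contracting the stationarity relation with $x^0$). Restricting to $U$, every term in $(\mathcal{A}x^{0})_i$ that involves an index outside $U$ vanishes, so the positive vector $x^{0}|_U$ is an eigenvector of the principal subtensor $\mathcal{A}_U$ with eigenvalue $\Lambda$; the Perron--Frobenius theory for nonnegative tensors then forces $\Lambda=\rho(\mathcal{A}_U)$, and monotonicity of the spectral radius under passing to principal subtensors yields $\rho(\mathcal{A}_U)\le\rho(\mathcal{A})=\rho(\mathcal{G})$. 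Combining the two bounds gives equality.

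The main obstacle is precisely the reverse inequality when $x^{0}$ lies on the boundary of the nonnegative orthant, where a naive Lagrange computation fails to produce a genuine eigenvector of the full tensor; the device of passing to the support $U$ and appealing to the two Perron--Frobenius facts (a positive eigenvector certifies the spectral radius, and the spectral radius is monotone under principal subtensors) is what carries the argument through. A cleaner alternative that avoids the boundary analysis is to perturb $\mathcal{A}$ to $\mathcal{A}+\varepsilon\mathcal{J}$ for a positive symmetric tensor $\mathcal{J}$, so the perturbed form has an interior positive maximizer giving an honest eigenvector, and then let $\varepsilon\to 0^{+}$ using continuity of $\rho$.
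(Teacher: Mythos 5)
The paper offers no proof of this lemma at all: it is imported verbatim from \cite{Qi}, so there is no internal argument to compare yours against; judged on its own terms, your proof is correct, and it is the standard variational argument. The forward inequality is exactly right: Lemma~\ref{irreducibility} supplies a unit (in the $k$-norm, as the paper defines unit vectors) nonnegative eigenvector $x^{*}$, and contracting the eigenvalue equation with $x^{*}$ gives $(x^{*})^\top(\mathcal{A}x^{*})=\rho(\mathcal{G})$. For the reverse inequality, your central device --- restricting to the support $U$ of a maximizer $x^{0}$, where the KKT conditions (legitimate here: the gradient of the constraint $\|x\|_k^k=1$ is supported on $U$ while the gradients of the active sign constraints are supported off $U$, so constraint qualification holds) turn $x^{0}|_U$ into a positive eigenvector of the principal subtensor $\mathcal{A}_U$ --- is precisely the right way to defeat the boundary problem, and it is the symmetry of the adjacency tensor that makes the gradient of the form equal $k\,\mathcal{A}x$. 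Two streamlining remarks. First, once $\Lambda$ is known to be an eigenvalue of $\mathcal{A}_U$, you need no Perron--Frobenius certification to get $\Lambda\le\rho(\mathcal{A}_U)$: that is immediate from the definition of the spectral radius as the largest modulus of an eigenvalue (if you do want the certification route, it follows from the paper's own Lemmas~\ref{Tensor1} and~\ref{Tensor2} by diagonally scaling $\mathcal{A}_U$ with the positive eigenvector, which makes all row sums equal to $\Lambda$). Second, the only genuinely external ingredient is monotonicity of the spectral radius under passing to principal subtensors; this is in \cite{YY}, and while one could instead observe that $\mathcal{A}_U$ is the adjacency tensor of the subhypergraph of $\mathcal{G}$ whose edges lie inside $U$, the paper's Lemma~\ref{subhypergraph} assumes $\mathcal{G}$ connected, so citing \cite{YY} is cleaner. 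Your perturbation alternative also goes through, but it additionally needs continuity of $\rho(\cdot)$ in the tensor entries, again a fact from the nonnegative-tensor literature rather than from this paper's preliminaries.
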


\begin{Lemma} \label{connec} \cite{PT,Qi} Let $\mathcal{G}$ be a $k$-uniform hypergraph. Then $\mathcal{A}(\mathcal{G})$ ($\mathcal{Q}(G)$, respectively) is weakly irreducible if and only if $\mathcal{G}$ is connected.
\end{Lemma}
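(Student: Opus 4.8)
The plan is to reduce both equivalences to a single combinatorial dictionary between the arc set of the associated digraph and the incidence structure of the hypergraph, and then treat the signless Laplacian case as an easy corollary of the adjacency case.

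First I would unwind the definition of $D_{\mathcal{A}(\mathcal{G})}$. By the definition of the adjacency tensor, $A_{ii_2\dots i_k}\neq 0$ precisely when $\{i,i_2,\dots,i_k\}\in E(\mathcal{G})$; since an edge has cardinality $k$, these $k$ indices must all be distinct, so no nonzero entry has a repeated index. Reading off the arc set, this means that for distinct $i,j$ there is an arc $(i,j)$ in $D_{\mathcal{A}(\mathcal{G})}$ if and only if $i$ and $j$ lie in a common edge of $\mathcal{G}$, and that $\mathcal{A}(\mathcal{G})$ produces no self-loops. This dictionary is the crux of the argument.

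Next, for the implication ``$\mathcal{G}$ connected $\Rightarrow$ $\mathcal{A}(\mathcal{G})$ weakly irreducible,'' I would fix arbitrary distinct $i,j$ and a connecting edge sequence $e_1,\dots,e_r$ with $i\in e_1$, $j\in e_r$ and $e_s\cap e_{s+1}\neq\emptyset$. Choosing a vertex $v_s\in e_s\cap e_{s+1}$ for each $s$ and setting $v_0=i$, $v_r=j$, each consecutive pair $v_{s-1},v_s$ lies in the common edge $e_s$, hence gives an arc of $D_{\mathcal{A}(\mathcal{G})}$; concatenating yields a directed walk, hence a directed path, from $i$ to $j$. For the converse I would run the same reasoning backwards: a directed path $i=u_0\to\cdots\to u_m=j$ supplies, for each arc $(u_{t-1},u_t)$, a common edge $f_t\ni u_{t-1},u_t$, and since $f_t$ and $f_{t+1}$ both contain $u_t$ their intersection is nonempty, so $f_1,\dots,f_m$ is a connecting edge sequence witnessing that $i$ and $j$ are connected in $\mathcal{G}$.

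Finally, for the signless Laplacian I would use $\mathcal{Q}(\mathcal{G})=\mathcal{D}(\mathcal{G})+\mathcal{A}(\mathcal{G})$ together with the fact that $\mathcal{D}(\mathcal{G})$ has nonzero entries only on the main diagonal, so it can contribute at most self-loops $(i,i)$ to $D_{\mathcal{Q}(\mathcal{G})}$. Since self-loops are irrelevant to strong connectivity among distinct vertices, $D_{\mathcal{Q}(\mathcal{G})}$ and $D_{\mathcal{A}(\mathcal{G})}$ have identical off-diagonal arc sets and are therefore strongly connected simultaneously, so the claim for $\mathcal{Q}(\mathcal{G})$ follows from that for $\mathcal{A}(\mathcal{G})$. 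The argument is essentially bookkeeping; the only step requiring genuine care is the first one—correctly translating the tensor entries into arcs and exploiting the distinctness of edge vertices—after which the walk-versus-edge-sequence correspondence and the diagonal-tensor observation are routine.
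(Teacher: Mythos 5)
Your proof is correct. Note that the paper itself gives no proof of this lemma—it is quoted from the references [PT] (Pearson--Zhang) and [Qi]—so there is nothing internal to compare against; your argument (the dictionary ``arc $(i,j)$ in $D_{\mathcal{A}(\mathcal{G})}$ $\Leftrightarrow$ $i,j$ share an edge,'' the walk/edge-sequence translation in both directions, and the observation that $\mathcal{D}(\mathcal{G})$ contributes only self-loops so $\mathcal{Q}(\mathcal{G})$ inherits the result from $\mathcal{A}(\mathcal{G})$) is exactly the standard one used in those sources. The only point worth polishing is the degenerate case $v_{s-1}=v_s$ in your connecting sequence, where no arc exists (the adjacency tensor has no self-loops) and none is needed; one should say the repeated vertex is simply dropped before extracting the directed path.
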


A hypergraph $\mathcal{H}$ is a subhypergraph of $\mathcal{G}$ if $V(\mathcal{H})\subseteq V(\mathcal{G})$ and  $E(\mathcal{H})\subseteq E(\mathcal{G})$.

\begin{Lemma} \cite{CD, FYZ} \label{subhypergraph}
Let $\mathcal{G}$ be a connected $k$-uniform hypergraph and $\mathcal{H}$ a subhypergraph of $\mathcal{G}$. Then
$\rho(\mathcal{H})\leq \rho(\mathcal{G})$ with equality if and only if $\mathcal{H}=\mathcal{G}$.
\end{Lemma}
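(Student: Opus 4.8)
The plan is to combine the variational formula of Lemma~\ref{QLQ} with the Perron--Frobenius theory recorded in Lemmas~\ref{irreducibility} and~\ref{connec}. First I would rewrite $x^\top(\mathcal{A}(\mathcal{G})x)$ as a sum over edges: since $\mathcal{A}(\mathcal{G})$ is symmetric and the $k!$ orderings of each edge $e$ each carry the entry $\tfrac{1}{(k-1)!}$,
\[
x^\top(\mathcal{A}(\mathcal{G})x)=\sum_{i_1,\dots,i_k\in[n]}A_{i_1\dots i_k}x_{i_1}\cdots x_{i_k}=k\sum_{e\in E(\mathcal{G})}\prod_{i\in e}x_i,
\]
which for nonnegative $x$ is monotone in the edge set. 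To get $\rho(\mathcal{H})\le\rho(\mathcal{G})$ I would take a nonnegative unit eigenvector $y$ of $\mathcal{A}(\mathcal{H})$ for $\rho(\mathcal{H})$ (Lemma~\ref{irreducibility}), extend it to $x\in\mathbb{R}^n_{+}$ by assigning $0$ to the vertices in $V(\mathcal{G})\setminus V(\mathcal{H})$, and note $\|x\|_k=1$. Then by Lemma~\ref{QLQ} and $E(\mathcal{H})\subseteq E(\mathcal{G})$,
\[
\rho(\mathcal{G})\ge x^\top(\mathcal{A}(\mathcal{G})x)=k\sum_{e\in E(\mathcal{G})}\prod_{i\in e}x_i\ge k\sum_{e\in E(\mathcal{H})}\prod_{i\in e}x_i=y^\top(\mathcal{A}(\mathcal{H})y)=\rho(\mathcal{H}),
\]
the last equality holding because $y$ is an eigenvector with $\|y\|_k=1$.

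For the equality case, assume $\rho(\mathcal{H})=\rho(\mathcal{G})=:\rho$; then every inequality in the last display is an equality. From the middle equality I read off that $\prod_{i\in e}x_i=0$ for each $e\in E(\mathcal{G})\setminus E(\mathcal{H})$, i.e.\ every missing edge contains a vertex where $x$ vanishes. From the first equality, the nonnegative unit vector $x$ attains the maximum in Lemma~\ref{QLQ}; writing the first-order (KKT) conditions for maximizing $x^\top(\mathcal{A}(\mathcal{G})x)$ subject to $\|x\|_k=1$ and $x\ge 0$, and using $\mathcal{A}(\mathcal{G})x\ge 0$, I would conclude $(\mathcal{A}(\mathcal{G})x)_i=\rho\,x_i^{k-1}$ for all $i$ (both sides vanish where $x_i=0$), so $x$ is a nonnegative eigenvector of $\mathcal{A}(\mathcal{G})$ for $\rho$.

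Since $\mathcal{G}$ is connected, $\mathcal{A}(\mathcal{G})$ is weakly irreducible by Lemma~\ref{connec}. Invoking the Perron--Frobenius theory behind Lemma~\ref{irreducibility}, namely that a weakly irreducible nonnegative tensor admits no nonnegative eigenvector for its spectral radius other than the positive Perron vector, I would conclude $x>0$. As $x$ is $0$ off $V(\mathcal{H})$, positivity forces $V(\mathcal{H})=V(\mathcal{G})$, and then $\prod_{i\in e}x_i>0$ for every edge, so the condition ``$\prod_{i\in e}x_i=0$ on $E(\mathcal{G})\setminus E(\mathcal{H})$'' gives $E(\mathcal{G})\setminus E(\mathcal{H})=\emptyset$, i.e.\ $E(\mathcal{H})=E(\mathcal{G})$. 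Hence $\mathcal{H}=\mathcal{G}$, and conversely $\mathcal{H}=\mathcal{G}$ trivially yields equality.

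The main obstacle is the passage from ``$x$ attains the maximum'' to ``$x>0$.'' Two facts carry it and both need care: that a nonnegative maximizer is a genuine eigenvector (the boundary/KKT analysis, since $x$ may have zero entries and the constraint $x\ge 0$ is then active), and that a nonnegative eigenvector of a weakly irreducible nonnegative tensor for $\rho$ must be positive. The latter is exactly the strengthening of Lemma~\ref{irreducibility} furnished by the Perron--Frobenius theory for weakly irreducible tensors; I would either cite it directly or reprove that the support of such an eigenvector is all of $[n]$ using strong connectivity of the digraph $D_{\mathcal{A}(\mathcal{G})}$.
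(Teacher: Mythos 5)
First, a framing remark: the paper does not prove this lemma at all; it is quoted from \cite{CD,FYZ}, so your proposal has to stand on its own. The first half of your argument is correct and is the standard route: the edge expansion $x^\top(\mathcal{A}(\mathcal{G})x)=k\sum_{e\in E(\mathcal{G})}\prod_{i\in e}x_i$, the zero-extension of the eigenvector of $\mathcal{H}$, and Lemma~\ref{QLQ} give $\rho(\mathcal{H})\le\rho(\mathcal{G})$ cleanly. The KKT step is also sound: at a nonnegative maximizer the active constraints $x_i=0$ have gradients independent of that of $\|x\|_k^k=1$, and $(\mathcal{A}(\mathcal{G})x)_i\ge0$ forces the corresponding multipliers to act trivially, so $(\mathcal{A}(\mathcal{G})x)_i=\rho x_i^{k-1}$ for all $i$.

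The gap is in the final positivity step, and it is exactly where you yourself flagged caution. Lemma~\ref{irreducibility} (and the Perron--Frobenius theorems of \cite{FGH,YY} behind it) asserts only the \emph{existence of a unique positive} unit eigenvector for $\rho$; it does not assert that every \emph{nonnegative} eigenvector for $\rho$ is positive, which is what you need. That stronger statement is true, but your fallback plan --- ``reprove that the support is all of $[n]$ using strong connectivity of $D_{\mathcal{A}(\mathcal{G})}$'' --- fails for $k\ge3$: at a vertex $u$ with $x_u=0$, the equation $\sum_{e\ni u}\prod_{i\in e\setminus\{u\}}x_i=0$ only forces every edge through $u$ to contain a \emph{second} zero vertex, which is perfectly compatible with strong connectivity. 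Concretely, for the single edge $\{1,2,3\}$ with $k=3$, the digraph $D_{\mathcal{A}}$ is strongly connected, yet $(1,0,0)^\top$ is a nonnegative eigenvector (for the eigenvalue $0$); hence no support-chasing argument that ignores \emph{which} eigenvalue is involved can succeed. What is needed is a spectral argument that uses maximality of $\rho$. For instance: if $S=\mathrm{supp}(x)\ne V(\mathcal{G})$, connectivity yields an edge $e^*$ with $e^*\cap S\ne\emptyset$ and $m:=|e^*\setminus S|\ge1$; set $w=x+\epsilon\mathbf{1}_{e^*\setminus S}$. Then $w^\top(\mathcal{A}(\mathcal{G})w)\ge x^\top(\mathcal{A}(\mathcal{G})x)+k\epsilon^{m}\prod_{i\in e^*\cap S}x_i=\rho(\mathcal{G})+k\epsilon^{m}P$ with $P>0$, while $\|w\|_k^k=1+m\epsilon^k$; since $m\le k-1<k$, for small $\epsilon>0$ the Rayleigh quotient of $w/\|w\|_k$ exceeds $\rho(\mathcal{G})$, contradicting Lemma~\ref{QLQ}. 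With this perturbation argument (or with a citation to a literature result stated for nonnegative eigenvectors of weakly irreducible tensors, rather than to Lemma~\ref{irreducibility}, which does not contain it), your proof becomes complete.
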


For two tensors $\mathcal{M}$ and $\mathcal{N}$ of order $k\ge 2$ and dimension $n$, if  there is an $n\times n$ nonsingular diagonal matrix $U$ such that $\mathcal{N}=U^{-(k-1)}\mathcal{M}U$, then we say that $\mathcal{M}$ and $\mathcal{N}$ are  diagonal similar. 

\begin{Lemma}\cite{Sh} \label{Tensor1}
Let $\mathcal{M}$ and $\mathcal{N}$ be two   diagonal similar tensors of order $k\ge 2$ and dimension $n$. Then $\mathcal{M}$  and $\mathcal{N}$
have the same  real eigenvalues.
\end{Lemma}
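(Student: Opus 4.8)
The plan is to prove the statement by exhibiting an explicit correspondence between eigenpairs of $\mathcal{M}$ and $\mathcal{N}$ that is induced by the diagonal matrix $U$, thereby avoiding any appeal to characteristic polynomials or determinants of tensors.

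First I would translate the relation $\mathcal{N}=U^{-(k-1)}\mathcal{M}U$ into a statement about entries. Writing $U=\mathrm{diag}(u_1,\dots,u_n)$ with every $u_i\neq 0$ and applying the general product formula from the introduction (using that $U$ is diagonal, so $U_{ij}=u_i\delta_{ij}$), I expect to obtain $N_{i i_2\dots i_k}=u_i^{-(k-1)}u_{i_2}\cdots u_{i_k}\,M_{i i_2\dots i_k}$ for all indices $i,i_2,\dots,i_k\in[n]$. This entrywise description is the workhorse of the whole argument: the factor $u_i^{-(k-1)}$ comes from left multiplication by $U^{-(k-1)}$, and the factors $u_{i_2}\cdots u_{i_k}$ come from right multiplication by $U$.

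Next, given a real eigenvalue $\rho$ of $\mathcal{M}$ with a nonzero eigenvector $x$, I would set $y=U^{-1}x$, that is $y_i=u_i^{-1}x_i$, which is nonzero because $U$ is nonsingular. Substituting the entrywise formula into $(\mathcal{N}y)_i$ and pulling out the factor $u_i^{-(k-1)}$, the factors $u_{i_2},\dots,u_{i_k}$ arising from $\mathcal{N}$ cancel exactly against the factors $u_{i_2}^{-1},\dots,u_{i_k}^{-1}$ arising from $y_{i_2}\cdots y_{i_k}$, leaving $(\mathcal{N}y)_i=u_i^{-(k-1)}(\mathcal{M}x)_i=\rho\,u_i^{-(k-1)}x_i^{k-1}=\rho\,y_i^{k-1}$. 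Hence $\mathcal{N}y=\rho\,y^{[k-1]}$, so $\rho$ is an eigenvalue of $\mathcal{N}$.

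Finally, I would observe that diagonal similarity is a symmetric relation: from $\mathcal{N}=U^{-(k-1)}\mathcal{M}U$ one rearranges the entrywise identity to get $\mathcal{M}=W^{-(k-1)}\mathcal{N}W$ with $W=U^{-1}$, again a nonsingular diagonal matrix. Running the same substitution in the reverse direction shows that every real eigenvalue of $\mathcal{N}$ is a real eigenvalue of $\mathcal{M}$, which closes the argument. I do not anticipate a genuine obstacle here, since the proof is a direct verification; the only point requiring care is the bookkeeping of the exponents of the $u_i$ when expanding $(\mathcal{N}y)_i$, so that the cancellation yields exactly $\rho\,y_i^{k-1}$ (matching the $(k-1)$-st Hadamard power in the eigenvalue equation) rather than some other power.
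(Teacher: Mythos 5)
Your proof is correct, but note that the paper itself contains no proof of this lemma: it is imported verbatim from Shao \cite{Sh}, where it follows from a more general theorem that similar tensors have the same characteristic polynomial, proved via the resultant-based definition of that polynomial. Your route is genuinely different and more elementary. You first establish the entrywise formula $N_{ii_2\dots i_k}=u_i^{-(k-1)}u_{i_2}\cdots u_{i_k}M_{ii_2\dots i_k}$ (this is exactly the identity the paper itself uses implicitly inside Proposition~\ref{Tensor3} and Theorems~\ref{Tensor5}, \ref{Mo}, \ref{Tensor7}), and then exhibit the explicit eigenvector correspondence $x\mapsto y=U^{-1}x$, for which the cancellation $u_{i_j}\cdot u_{i_j}^{-1}=1$ gives $(\mathcal{N}y)_i=u_i^{-(k-1)}(\mathcal{M}x)_i=\rho\,y_i^{k-1}$, i.e., $\mathcal{N}y=\rho\,y^{[k-1]}$; the observation that $\mathcal{M}=W^{-(k-1)}\mathcal{N}W$ with $W=U^{-1}$ makes the relation symmetric and completes the equivalence. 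Under the paper's eigenvector-based definition of eigenvalue this is fully rigorous and in fact proves more than the lemma states: the full spectra of $\mathcal{M}$ and $\mathcal{N}$, complex eigenvalues included, coincide as sets. What Shao's characteristic-polynomial argument buys in exchange is preservation of algebraic multiplicities, which an eigenvector bijection does not track; but for the way the lemma is used in this paper --- transporting the single real eigenvalue $\rho(\mathcal{T})$ across a diagonal similarity so that row sums of the transformed tensor can be compared via Lemma~\ref{Tensor2} --- your self-contained argument is entirely sufficient.
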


\begin{Lemma}\label{Tensor2} \cite{LCL,YY}
Let $\mathcal{T}$ be a nonnegative tensor of order $k\ge 2$ and dimension $n$. Then
\[\min_{1\leq i \leq n}r_i(\mathcal{T})\leq \rho(\mathcal{T})\leq \max_{1\leq i \leq n}r_i(\mathcal{T}).\]
Moreover, if $\mathcal{T}$ is weakly irreducible, then either equality holds if and only if $r_1(\mathcal{T})=\cdots=r_n(\mathcal{T})$.
\end{Lemma}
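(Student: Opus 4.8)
The plan is to read off both inequalities from the Perron--Frobenius eigenvector supplied by Lemma~\ref{irreducibility} and the componentwise eigenequation $(\mathcal{T}x)_i=\rho(\mathcal{T})\,x_i^{k-1}$, after recording that $r_i(\mathcal{T})=(\mathcal{T}\mathbf{1})_i$, where $\mathbf{1}$ denotes the all-ones vector. The upper bound is the routine direction; the lower bound needs a perturbation trick to cope with possible zero entries of the eigenvector; and the equality analysis is where weak irreducibility does the real work.

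For the upper bound I would take a nonnegative eigenvector $x\in\mathbb{R}^n_+$ for $\rho:=\rho(\mathcal{T})$ (Lemma~\ref{irreducibility}) and choose $p$ with $x_p=\max_i x_i$; since $x\neq 0$ we have $x_p>0$. Writing out the $p$-th equation and using $0\le x_{i_j}\le x_p$ together with $T_{pi_2\dots i_k}\ge 0$ gives
\[
\rho\,x_p^{k-1}=\sum_{i_2,\dots,i_k\in[n]}T_{pi_2\dots i_k}x_{i_2}\cdots x_{i_k}\le x_p^{k-1}\sum_{i_2,\dots,i_k\in[n]}T_{pi_2\dots i_k}=x_p^{k-1}r_p(\mathcal{T}),
\]
so that $\rho\le r_p(\mathcal{T})\le\max_i r_i(\mathcal{T})$ after dividing by $x_p^{k-1}>0$.

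For the lower bound I would first assume $\mathcal{T}$ weakly irreducible, so that by Lemma~\ref{irreducibility} the eigenvector $x$ may be taken \emph{positive}; choosing $q$ with $x_q=\min_i x_i>0$ and reversing the inequalities above yields $\rho\ge r_q(\mathcal{T})\ge\min_i r_i(\mathcal{T})$. To drop the irreducibility hypothesis I would pass to the positive tensor $\mathcal{T}_\varepsilon=\mathcal{T}+\varepsilon\mathcal{J}$ for $\varepsilon>0$, where $\mathcal{J}$ is the all-ones tensor; its associated digraph is complete, hence strongly connected, so $\mathcal{T}_\varepsilon$ is weakly irreducible. The case just proved gives $\rho(\mathcal{T}_\varepsilon)\ge\min_i r_i(\mathcal{T}_\varepsilon)=\min_i r_i(\mathcal{T})+\varepsilon n^{k-1}$, and letting $\varepsilon\to 0^+$, while using that $\rho$ depends continuously on the entries of a nonnegative tensor (its eigenvalues being roots of the characteristic polynomial), delivers $\rho(\mathcal{T})\ge\min_i r_i(\mathcal{T})$.

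Finally, for the equality statement with $\mathcal{T}$ weakly irreducible, the converse is immediate: if all $r_i(\mathcal{T})$ equal a common value $r$, then $\min_i r_i=\max_i r_i=r$, and the two inequalities force $\rho=r$, so both become equalities. For the forward direction, suppose $\rho=\max_i r_i$ and take the positive eigenvector $x$. At a maximizing index $p$ one has $\rho\le r_p\le\max_i r_i=\rho$, so the chain above collapses to equalities throughout; in particular $x_{i_2}\cdots x_{i_k}=x_p^{k-1}$ whenever $T_{pi_2\dots i_k}>0$, and since $0<x_{i_j}\le x_p$ this forces $x_j=x_p$ for every out-neighbour $j$ of $p$ in $D_{\mathcal{T}}$. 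Running the same argument at each such $j$ (again a maximizing index) and invoking strong connectivity of $D_{\mathcal{T}}$, I would propagate $x_i=x_p$ to all $i$, whence $x=x_p\mathbf{1}$; substituting back gives $r_i(\mathcal{T})=\rho$ for every $i$. The case $\rho=\min_i r_i$ is symmetric via the minimizing index. I expect the propagation step --- upgrading a single local equality to the global conclusion through the strong connectivity of $D_{\mathcal{T}}$ --- to be the main obstacle, with the zero-entry issue in the lower bound the secondary technical point.
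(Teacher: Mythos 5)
The paper offers no proof of this lemma for you to be compared against: it is imported verbatim from \cite{LCL,YY}, so your attempt can only be judged against the standard arguments in those references --- and judged that way, it is correct and is in substance the same proof. The upper bound via a maximal entry of the nonnegative eigenvector supplied by Lemma~\ref{irreducibility}, the lower bound via a minimal entry of the positive eigenvector in the weakly irreducible case, the perturbation $\mathcal{T}_\varepsilon=\mathcal{T}+\varepsilon\mathcal{J}$ to remove the irreducibility hypothesis, and the equality analysis that collapses the inequality chain at an extremal index and propagates along $D_{\mathcal{T}}$ by strong connectivity are exactly the devices of \cite{YY} and of the weakly irreducible Perron--Frobenius literature. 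Each step checks out: at a maximizing index $p$, equality in $\sum T_{pi_2\dots i_k}x_{i_2}\cdots x_{i_k}\le x_p^{k-1}r_p(\mathcal{T})$ with $0<x_{i_j}\le x_p$ does force $x_j=x_p$ for every out-neighbour $j$ of $p$; each such $j$ is then itself a maximizing index, strong connectivity reaches every vertex from $p$, and substituting $x=x_p\mathbf{1}$ into the eigenequation gives $r_i(\mathcal{T})=\rho(\mathcal{T})$ for all $i$ (the minimum case being symmetric). The one step that leans on machinery beyond the paper's stated lemmas is the continuity of $\rho$ in the tensor entries as $\varepsilon\to 0^+$: your justification (eigenvalues are precisely the roots of the monic characteristic polynomial, whose coefficients depend polynomially on the entries) is legitimate, but it invokes resultant theory. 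If you prefer to avoid that, the device used in \cite{YY} is compactness instead of continuity: take unit positive eigenvectors $x_\varepsilon$ of $\mathcal{T}_\varepsilon$, extract a convergent subsequence as $\varepsilon\to 0^+$, and observe that the limit is a unit nonnegative eigenvector of $\mathcal{T}$ whose eigenvalue $\lim_{\varepsilon\to 0^+}\rho(\mathcal{T}_\varepsilon)$ is therefore at most $\rho(\mathcal{T})$, which gives the same conclusion with only the paper's own toolkit.
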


\begin{Proposition}\label{Tensor3}
Let $\mathcal{T}$ be a nonnegative tensor of order $k\ge 2$ and dimension $n$ with all row sums positive. Then
\[\min_{1\leq i \leq n}m_i(\mathcal{T})\leq \rho(\mathcal{T})\leq \max_{1\leq i \leq n}m_i(\mathcal{T}).\]
Moreover, if $\mathcal{T}$ is weakly irreducible, then either equality holds if and only if $m_1(\mathcal{T})=\cdots=m_n(\mathcal{T})$.
\end{Proposition}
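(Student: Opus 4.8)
The plan is to reduce Proposition~\ref{Tensor3} to the row-sum bound in Lemma~\ref{Tensor2} by means of a diagonal similarity transformation, since $m_i(\mathcal{T})$ is nothing but the row sum of a tensor obtained from $\mathcal{T}$ by rescaling with the row sums $r_i(\mathcal{T})$. Concretely, I would set $U=\mathrm{diag}(r_1(\mathcal{T}),\dots,r_n(\mathcal{T}))$, which is nonsingular because all row sums are positive by hypothesis, and form the diagonal-similar tensor $\mathcal{N}=U^{-(k-1)}\mathcal{T}U$. The point of this choice is that the $i$-th row sum of $\mathcal{N}$ works out exactly to $m_i(\mathcal{T})$.

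First I would compute the entries of $\mathcal{N}$. Writing out the product using the definition of tensor multiplication (and noting $U$ is diagonal so only its diagonal entries $r_i:=r_i(\mathcal{T})$ appear), one gets $N_{ii_2\dots i_k}=r_i^{-(k-1)}\,T_{ii_2\dots i_k}\,r_{i_2}\cdots r_{i_k}$. Summing over $i_2,\dots,i_k$ then gives $r_i(\mathcal{N})=\dfrac{\sum_{i_2,\dots,i_k\in[n]}T_{ii_2\dots i_k}\,r_{i_2}\cdots r_{i_k}}{r_i^{k-1}}=m_i(\mathcal{T})$, precisely the average $2$-row sum. Two observations make the rest routine: $\mathcal{N}$ is nonnegative (rescaling by positive factors preserves nonnegativity), and $\mathcal{N}$ has the same nonzero pattern as $\mathcal{T}$, so its associated digraph $D_{\mathcal{N}}$ equals $D_{\mathcal{T}}$; hence $\mathcal{N}$ is weakly irreducible exactly when $\mathcal{T}$ is.

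Next I would invoke Lemma~\ref{Tensor1} to conclude that $\mathcal{M}=\mathcal{T}$ and $\mathcal{N}$ have the same real eigenvalues. Since both tensors are nonnegative, Lemma~\ref{irreducibility} guarantees that $\rho(\mathcal{T})$ and $\rho(\mathcal{N})$ are themselves eigenvalues (and real), and as the spectral radius is the largest modulus among all eigenvalues, the coincidence of the real spectra forces $\rho(\mathcal{T})=\rho(\mathcal{N})$. Applying Lemma~\ref{Tensor2} to the nonnegative tensor $\mathcal{N}$ then yields
\[
\min_{1\le i\le n} m_i(\mathcal{T})=\min_{1\le i\le n} r_i(\mathcal{N})\le \rho(\mathcal{N})=\rho(\mathcal{T})\le \max_{1\le i\le n} r_i(\mathcal{N})=\max_{1\le i\le n} m_i(\mathcal{T}),
\]
which is the desired inequality.

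Finally, for the equality characterization in the weakly irreducible case, I would transfer the corresponding statement of Lemma~\ref{Tensor2} through the same dictionary: when $\mathcal{T}$ is weakly irreducible, so is $\mathcal{N}$, and Lemma~\ref{Tensor2} says either equality holds if and only if all row sums of $\mathcal{N}$ are equal, i.e. $m_1(\mathcal{T})=\cdots=m_n(\mathcal{T})$. The main subtlety to get right is the bookkeeping in the first step---verifying that the diagonal-similar product indeed produces $m_i(\mathcal{T})$ as its row sum and that the weak-irreducibility pattern is preserved; everything after that is a direct appeal to the stated lemmas, so I expect no serious obstacle beyond that computation.
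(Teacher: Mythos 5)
Your proposal is correct and follows essentially the same route as the paper: the same diagonal similarity $U=\mathrm{diag}(r_1(\mathcal{T}),\dots,r_n(\mathcal{T}))$, the identification of the row sums of $U^{-(k-1)}\mathcal{T}U$ with the $m_i(\mathcal{T})$, and the appeal to Lemma~\ref{Tensor1} and Lemma~\ref{Tensor2}, with the preserved digraph handling the weakly irreducible equality case. Your extra justification that $\rho(\mathcal{T})=\rho(\mathcal{N})$ via Lemma~\ref{irreducibility} (both spectral radii being real eigenvalues) is slightly more careful than the paper's one-line invocation, but it is the same argument in substance.
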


\begin{proof} Let $U=diag (r_1(\mathcal{T}),\dots, r_n(\mathcal{T}))$ and $\mathcal{B}=U^{-(k-1)}\mathcal{T}U$.
Then $\mathcal{T}$ and $\mathcal{B}$ are diagonal similar, and thus we have by Lemma~\ref{Tensor1} that $\rho(\mathcal{T})=\rho(\mathcal{B})$.
Obviously,
\[B_{i_1\dots i_k}=\frac{T_{i_1\dots i_k}r_{i_2}(\mathcal{T})\cdots r_{i_k}(\mathcal{T})}{r_{i_1}^{k-1}(\mathcal{T})}\]
for $i_1,i_2,\dots, i_k\in[n]$.
Thus
\[r_i(\mathcal{B})=\frac{\sum_{i_2,\dots, i_k\in[n]}T_{ii_2\dots i_k}r_{i_2}(\mathcal{T})\cdots r_{i_k}(\mathcal{T})}{r_{i}^{k-1}(\mathcal{T})}=m_i(\mathcal{T})\]
for $i\in[n]$.
By Lemma~\ref{Tensor2}, we have \[\min_{1\leq i \leq n}m_i(\mathcal{T})=\min_{1\leq i \leq n}r_i(\mathcal{B})\leq \rho(\mathcal{T})=\rho(\mathcal{B})\leq \max_{1\leq i \leq n}r_i(\mathcal{B})=\max_{1\leq i \leq n}m_i(\mathcal{T}),\]
and if $\mathcal{T}$ is weakly irreducible, then since $D_\mathcal{T}=D_\mathcal{B}$, $\mathcal{B}$ is also weakly irreducible, and thus $\rho(\mathcal{T})=\min_{1\leq i \leq n}m_i(\mathcal{T})$ or $\rho(\mathcal{T})=\max_{1\leq i \leq n}m_i(\mathcal{T})$ if and only if $r_1(\mathcal{B})=\cdots=r_n(\mathcal{B})$, i.e., $m_1(\mathcal{T})=\cdots=m_n(\mathcal{T})$.
\end{proof}

For a  hypergraph $\mathcal{G}$, the blow-up of $\mathcal{G}$, denoted by $\mathcal{G}^1$, is the hypergraph obtained from $\mathcal{G}$ by adding a new common vertex $v$ to each edge. If $\mathcal{G}$ is a regular $(k-1)$-uniform hypergraph on $n-1$ vertices of degree $d$, then  $\mathcal{G}^1$ is a $k$-uniform hypergraph on $n$ vertices.

We use the techniques in \cite{YZL}.

\section{Main results}

Let $\mathcal{G}$ be a $k$-uniform hypergraph on $n$ vertices  without isolated vertices with average $2$-degrees $m_1\geq \dots \geq m_n$. By Proposition \ref{Tensor3},
$\rho(\mathcal{G})\leq m_1$. In the following, we give a upper bound for $\rho(\mathcal{G})$ using $m_1$ and $m_2$.

\begin{Theorem}\label{Tensor5}
Let $\mathcal{G}$ be a $k$-uniform hypergraph on $n$ vertices without isolated vertices with average $2$-degrees $m_1\geq  \dots \geq m_n$.
Then
\begin{equation} \label{HY}
\rho(\mathcal{G})\leq m_1^{\frac{1}{k}}m_{2}^{1-\frac{1}{k}}.
\end{equation}
Moreover, if $\mathcal{G}$ is connected,  then equality holds in (\ref{HY}) if and only  if each vertex of $\mathcal{G}$ has the same average $2$-degree.
\end{Theorem}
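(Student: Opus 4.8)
The plan is to work with the adjacency tensor $\mathcal{A}=\mathcal{A}(\mathcal{G})$ and estimate the quadratic form $x^\top(\mathcal{A}x)$ on the unit nonnegative eigenvector corresponding to $\rho(\mathcal{G})$, exploiting the spectral information provided by Lemma~\ref{irreducibility} and the variational characterization in Lemma~\ref{QLQ}. Since $m_i(\mathcal{A})$ is the average $2$-degree, I expect the bound to emerge from a weighted AM–GM inequality that interpolates between the crude bound $\rho(\mathcal{G})\le m_1$ (from Proposition~\ref{Tensor3}) and a refined estimate that brings in the second-largest average $2$-degree $m_2$. The exponents $\tfrac1k$ and $1-\tfrac1k$ strongly suggest that the key algebraic step is an application of AM–GM (or Hölder) to a product of $k$ terms, one of which is handled differently from the remaining $k-1$.

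Concretely, I would first let $x$ be the unit nonnegative eigenvector for $\rho(\mathcal{G})$ guaranteed by Lemma~\ref{irreducibility}, and suppose $x_p=\max_i x_i$. From the eigenvalue equation $\mathcal{A}x=\rho\, x^{[k-1]}$ applied at coordinate $p$, I would write
\begin{equation}\label{eq:plan1}
\rho\, x_p^{k-1}=\sum_{\{p,i_2,\dots,i_k\}\in E_p}x_{i_2}\cdots x_{i_k}.
\end{equation}
The natural move is to pass to the diagonal-similar tensor $\mathcal{B}=U^{-(k-1)}\mathcal{A}U$ with $U=\operatorname{diag}(r_1(\mathcal{A}),\dots,r_n(\mathcal{A}))$ used in Proposition~\ref{Tensor3}, so that $r_i(\mathcal{B})=m_i(\mathcal{A})=m_i$. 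Since $\mathcal{B}$ is nonnegative with the same spectral radius $\rho(\mathcal{G})$, I can instead analyze row sums of $\mathcal{B}$, where the row sums are literally the $m_i$. The idea is then to bound $\rho(\mathcal{B})$ by separating, for the index realizing the maximum of the eigenvector, the largest row sum $m_1$ from the others, which are at most $m_2$.

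The cleanest route I foresee is to let $y$ be the positive eigenvector of $\mathcal{B}$ for $\rho=\rho(\mathcal{B})$, normalize so that $\max_i y_i=y_q=1$, and write $\rho\, y_q^{k-1}=\rho=\sum_{i_2,\dots,i_k}B_{qi_2\dots i_k}y_{i_2}\cdots y_{i_k}\le r_q(\mathcal{B})=m_q\le m_1$; the refinement comes from also using the equation at a neighbor $t$ of $q$, where all but possibly one factor $y_{\cdot}$ equals $1$, giving a reverse inequality $\rho\, y_t^{k-1}\ge$ (something involving $m_t\ge m_2$ and $y_q=1$). Combining the upper estimate at $q$ with the lower estimate at $t$ and eliminating the auxiliary ratio $y_t$ should produce $\rho^k\le m_1\, m_2^{k-1}$, i.e.\ inequality~(\ref{HY}). \textbf{The main obstacle} I anticipate is the bookkeeping in this two-vertex argument: correctly accounting for how many factors of the eigenvector entries are forced to be $\le 1$ versus exactly $1$, and verifying that the exponents balance to exactly $k$. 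For the equality analysis, I would invoke weak irreducibility (Lemma~\ref{connec}) to guarantee a \emph{positive} eigenvector, so that all the AM–GM/Hölder steps are tight only when the eigenvector is constant; tracing back, constancy of the eigenvector of $\mathcal{B}$ forces all row sums $r_i(\mathcal{B})=m_i$ to coincide, which is precisely the condition that every vertex has the same average $2$-degree. The delicate point in the equality case will be showing that equality in the combined inequality propagates from the two distinguished vertices to the whole connected hypergraph, for which I would use the strong connectivity of $D_{\mathcal{B}}$ together with Lemma~\ref{Tensor2}.
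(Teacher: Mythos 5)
Your plan is not the paper's argument, and as written it does not close. The paper never touches an eigenvector: it forms a second diagonal similarity $\mathcal{T}=U^{-(k-1)}\mathcal{A}U$ with $U=\mbox{diag}(td_1,d_2,\dots,d_n)$, where the scaled vertex is precisely the one attaining $m_1$ and $t=(m_1/m_2)^{1/k}$, computes $r_1(\mathcal{T})=m_1/t^{k-1}$ and $r_i(\mathcal{T})\le tm_i\le tm_2$ for $i\ge 2$, and then invokes the row-sum bound of Lemma~\ref{Tensor2}; the whole point of that device is that one gets to \emph{choose} which vertex is distinguished. Your two-vertex eigenvector argument cannot make that choice, and that is where it breaks. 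Moreover, the step at the neighbor $t$ is incoherent as stated: the entries of $y$ satisfy $y_i\le 1$, they do not equal $1$, so ``all but possibly one factor $y_\cdot$ equals $1$'' is false; the eigenvalue equation at $t$ produces an \emph{upper} bound on $\rho\,y_t^{k-1}$, not the reverse inequality you want; and the claim $m_t\ge m_2$ has no justification, since a neighbor of the argmax vertex can have the smallest average $2$-degree in the hypergraph.

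The essential gap appears once the scheme is set up correctly. With $y_p$ the largest and $y_q$ the second-largest entry, one has (A) $\rho\,y_p^{k-1}\le m_p\,y_q^{k-1}$, because every vertex other than $p$ has entry at most $y_q$, and (B) $\rho\,y_q^{k-1}\le m_q\,y_p\,y_q^{k-2}$, because an edge through $q$ contains $p$ at most once; these combine to $\rho^k\le m_p\,m_q^{k-1}$. But $p$ is where the \emph{eigenvector} peaks, and nothing forces $m_p=m_1$. If the vertex attaining $m_1$ carries only the second-largest entry, then $m_p\le m_2$ and $m_q=m_1$, and you obtain only $\rho^k\le m_2m_1^{k-1}$, i.e.\ $\rho\le m_1^{1-\frac{1}{k}}m_2^{\frac{1}{k}}$, which is strictly weaker than (\ref{HY}) whenever $m_1>m_2$ and $k>2$. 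The repair is a case split your proposal never mentions: if $m_p\le m_2$, then (A) alone already gives $\rho\le m_p\le m_2\le m_1^{\frac{1}{k}}m_2^{1-\frac{1}{k}}$; otherwise $m_p=m_1$, which forces $m_q\le m_2$, and (A) together with (B) finishes. Two further points need attention even after this fix: your appeal to a positive eigenvector presupposes connectivity, whereas (\ref{HY}) is asserted for every hypergraph without isolated vertices (a routine reduction to components, but it must be said); and the equality analysis is too thin, since one must show equality is \emph{impossible} when $m_1>m_2$ --- the paper does this by showing equality would force every edge through the distinguished vertex and then deriving the contradiction $m_1\le m_2$ --- rather than merely asserting that tightness forces the eigenvector to be constant.
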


\begin{proof} Let $\mathcal{A}=\mathcal{A}(\mathcal{G})$.

If $m_1=m_2$, then by Proposition~\ref{Tensor3}, we have \[\rho(\mathcal{G})\leq m_1^{\frac{1}{k}}m_{2}^{1-\frac{1}{k}}=m_1,\]
and when $\mathcal{G}$ is connected, $A$ is weakly irreducible, and thus  
equality holds in (\ref{HY}) if and only  if each vertex of  $\mathcal{G}$ has the same average $2$-degree.

Suppose in the following that $m_1>m_2$.
Let $d_1, d_2,\dots, d_n$ be the degree sequence of $\mathcal{G}$.
Let $U$ be diagonal matrix $\mbox{diag}(td_1, d_2, \dots, d_n)$, where $t>1$ is a variable to be determined later.
Let  $\mathcal{T}=U^{-(k-1)}\mathcal{A}U$.
Then $\mathcal{A}$ and $\mathcal{T}$ are diagonal similar.
By Lemma~\ref{Tensor1}, $\mathcal{A}$ and $\mathcal{T}$ have the same real eigenvalues. By Lemma~\ref{irreducibility}, $\rho(\mathcal{A})$ is an eigenvalue of $\mathcal{A}$  and $\rho(\mathcal{T})$ is an eigenvalue of $\mathcal{T}$. Thus $\rho(\mathcal{G})=\rho(\mathcal{A})=\rho(\mathcal{T})$.
Obviously,
\[T_{i_1\dots i_k}=U^{-(k-1)}_{i_1i_1}A_{i_1\dots i_k}U_{i_2i_2}\cdots U_{i_ki_k}\]
for $i_1,\dots, i_k\in[n]$.
Then
\begin{eqnarray*}
r_1(\mathcal{T})&=& \sum_{i_2,\dots, i_k\in[n]}T_{ii_2\dots i_k}\\
&=&\sum_{i_2,\dots, i_k\in[n]}U^{-(k-1)}_{11}A_{1i_2\dots i_k}U_{i_2i_2}\cdots U_{i_ki_k}\\
&=& \sum_{i_2,\dots, i_k\in[n]\setminus\{1\}}(td_1)^{-(k-1)}A_{1i_2\dots i_k}d_{i_2}\cdots d_{i_k}\\
&=&\frac{\sum_{\{1,i_2,\dots, i_k\}\in E_1}d_{i_2}\cdots d_{i_k}}{(t d_1)^{k-1}}\\
&=& \frac{m_1}{t^{k-1}}.
\end{eqnarray*}
For $i=2, \dots, n$, let \[m_{1,i}=m_i-\frac{\sum_{1\notin\{i,i_2,\dots,i_k\}\in E_i}d_{i_2}\cdots d_{i_k}}{d_i^{k-1}},\]
and then
\begin{eqnarray*}
r_i(\mathcal{T})&=& \sum_{i_2,\dots, i_k\in[n]}T_{ii_2\dots i_k}\\
&=&\sum_{i_2,\dots, i_k\in[n]}U^{-(k-1)}_{ii}A_{ii_2\dots i_k}U_{i_2i_2}\cdots U_{i_ki_k}\\
&=&\sum_{i_2,\dots, i_k\in[n]\atop 1\in\{i_2,\dots, i_k\}}d_i^{-(k-1)}A_{ii_2\dots i_k}d_{i_2}\cdots d_{i_k}+\sum_{i_2,\dots, i_k\in[n]\atop 1\notin\{i_2,\dots, i_k\}}d_i^{-(k-1)}A_{ii_2\dots i_k}d_{i_2}\cdots d_{i_k}\\
&=&\frac{\sum_{i_2,\dots, i_k\in[n]\atop 1\in\{i_2,\dots, i_k\}}A_{ii_2\dots i_k}d_{i_2}d_{i_3}\cdots d_{i_k}}{d_i^{k-1}}+\frac{\sum_{i_2,\dots, i_k\in[n]\atop 1\notin\{i_2,\dots, i_k\}}A_{ii_2\dots i_k}d_{i_2}\cdots d_{i_k}}{d_i^{k-1}}\\
&=&\frac{\sum_{\{i,1,i_3,\dots, i_k\}\in E_i}(td_1)d_{i_3}\cdots d_{i_k}}{d_i^{k-1}}+\frac{\sum_{1\notin\{i,i_2,\dots,i_k\}\in E_i}d_{i_2}\cdots d_{i_k}}{d_i^{k-1}}\\
&=&\frac{t\sum_{\{i,1,i_3,\dots, i_k\}\in E_i} d_1d_{i_3}\cdots d_{i_k}}{d_i^{k-1}}+\frac{\sum_{1\notin\{i,i_2,\dots,i_k\}\in E_i}d_{i_2}\cdots d_{i_k}}{d_i^{k-1}}\\
&=& tm_{1,i}+m_i-m_{1,i}\\
&\leq& tm_i\\&\leq& tm_2,
\end{eqnarray*}
with equality if and only if $m_i=m_{1,i}$ and $m_i=m_2$.
Take $t=\left(\frac{m_1}{m_2}\right)^\frac{1}{k}$. Obviously, $t> 1$.
Then $r_1(\mathcal{T})=m_1^{\frac{1}{k}}m_{2}^{1-\frac{1}{k}}$ and
for $2\leq i\leq n$, $r_i(\mathcal{T})\leq tm_2=m_1^{\frac{1}{k}}m_{2}^{1-\frac{1}{k}}$. By Lemma~\ref{Tensor2},
 \[\rho(\mathcal{G})=\rho(\mathcal{T})\leq m_1^{\frac{1}{k}}m_{2}^{1-\frac{1}{k}}.\]

Now suppose that $\mathcal{G}$ is connected. By Lemma \ref{connec}, $\mathcal{A}(\mathcal{G})$ is weakly irreducible, and thus $\mathcal{T}$ is weakly irreducible since $D_{\mathcal{A}(\mathcal{G})}=D_{\mathcal{T}}$.

Suppose that  equality holds in (\ref{HY}). By Lemma \ref{Tensor2}, $r_1(\mathcal{T})=\cdots=r_n(\mathcal{T})$.
From the above argument, we have (i) $m_2=\cdots=m_n$, and (ii) $m_i=m_{1,i}$ for $i=2,\ldots,n$.  From (ii) and the definition of $m_{1,i}$, each edge of $\mathcal{G}$ contains vertex $1$, which implies that $d_1(k-1)=\sum_{i=2}^nd_i$. From (i),
\[m_1=\frac{\sum_{i=2}^n \frac{d^{k}_im_i}{d_1}}{d_1^{k-1}(k-1)}=\frac{\sum_{i=2}^n d^k_im_2}{d_1^k}=\frac{\sum_{i=2}^n d_im_2\left(\frac{d_i}{d_k}\right)^{k-1}}{d_1(k-1)}\leq\frac{\sum_{i=2}^nm_2d_i}{d_1(k-1)} =m_2,\]
a contradiction. Thus the inequality (\ref{HY})  is strict  if $m_1>m_2$.
\end{proof}

Let $\mathcal{G}$ be a connected $k$-uniform hypergraph on $n$ vertices with degree sequence $d_1\ge \cdots \ge d_n$. Then \cite{YZL}
\begin{equation} \label{old}
\rho(\mathcal{G})\le d_1^{\frac{1}{k}}d_2^{1-\frac{1}{k}}
\end{equation}
with equality if and only if $\mathcal{G}$ is a regular hypergraph or the blow-up hypergraph $\mathcal{H}^1$ of a regular $(k-1)$-uniform hypergraph $\mathcal{H}$ on $n-1$ vertices.


Obviously, if $\mathcal{G}$ is a regular hypergraph, then each vertex of $\mathcal{G}$ has the same average $2$-degree, and thus equality holds in (\ref{HY}) and (\ref{old}). For the blow-up hypergraph $\mathcal{H}^1$ of a regular $(k-1)$-uniform hypergraph $\mathcal{H}$ on $n-1$ vertices, the upper bound in (\ref{old}) is attained, while the upper bound in (\ref{HY}) is not attained. However,
in the following, we give two examples to show that
there are irregular hypergraphs for which each vertex has the same average $2$-degree. For such hypergraphs, the upper in (\ref{HY}) is attained, while the upper bound in (\ref{old}) can not be attained.

Let $\mathcal{H}_1$ be a $3$-uniform hypergraph with vertex set $V(\mathcal{H}_1)=[34]$ and $E(\mathcal{H}_1)=\{e_i:1\leq i \leq 51\}$, where
\begin{eqnarray*}
\begin{matrix}
e_1=\{1,2,5\}, & e_2=\{1,2,6\}, & e_3=\{1,2,7\}, &
e_4=\{1,2,8\},\\
e_5=\{1,2,9\}, & e_6=\{1,2,10\}, & e_7=\{1,2,11\}, & e_8=\{1,2,12\},\\
e_9=\{1,2,13\}, & e_{10}=\{3,4,5\}, & e_{11}=\{3,4,6\}, & e_{12}=\{3,4,7\},\\
e_{13}=\{3,4,8\}, & e_{14}=\{3,4,9\},& e_{15}=\{3,4,10\},& e_{16}=\{3,4,11\},\\
e_{17}=\{3,4,12\}, & e_{18}=\{3,4,13\}, & e_{19}=\{5,6,14\}, & e_{20}=\{6,7,15\}, \\
e_{21}=\{7,8,16\}, & e_{22}=\{8,9,17\}, & e_{23}=\{9,10,18\}, & e_{24}=\{10,11,19\}, \\
e_{25}=\{11,12,20\}, & e_{26}=\{12,13,21\}, & e_{27}=\{13,5,22\}, & e_{28}=\{5,23,24\}, \\
e_{29}=\{5,25,26\}, & e_{30}=\{6,27,28\}, & e_{31}=\{6,29,30\}, & e_{32}=\{7,31,32\}, \\
e_{33}=\{7,33,34\}, & e_{34}=\{8,23,24\}, & e_{35}=\{8,25,26\}, & e_{36}=\{9,27,28\}, \\
e_{37}=\{9,29,30\}, & e_{38}=\{10,31,32\}, & e_{39}=\{10,33,34\}, & e_{40}=\{11,23,24\}, \\
e_{41}=\{11,25,26\}, & e_{42}=\{12,27,28\}, & e_{43}=\{12,29,30\}, & e_{44}=\{13,31,32\}, \\
e_{45}=\{13,33,34\}, & e_{46}=\{14,15,16\}, & e_{47}=\{17,18,19\}, & e_{48}=\{20,21,22\}, \\
e_{49}=\{14,17,20\}, & e_{50}=\{15,18,21\}, & e_{51}=\{16,19,22\}.
\end{matrix}
\end{eqnarray*}
By direct calculation, we have
\[d_i= \begin{cases}
9& \text{if } 1\leq i\leq 4,\\
6 & \text{if } 5\leq i\leq 13,\\
3 & \text{if } 14\leq i\leq 34,
\end{cases}\]
and
\begin{eqnarray*}
m_i&=& \begin{cases}
\frac{(9\times 6)\times 9}{9\times9}& \text{if } 1\leq i\leq 4,\\
\frac{(9\times 9)\times 2+(6\times 3)\times 2+(3\times 3)\times 2}{6\times6} & \text{if } 5\leq i\leq 13,\\
\frac{6\times 6+(3\times 3)\times 2}{3\times3} & \text{if } 14\leq i\leq 22,\\
\frac{(6\times 3)\times 3}{3\times3} & \text{if } 23\leq i\leq 34
\end{cases}\\
&=&6.
\end{eqnarray*}
By Theorem~\ref{Tensor5}, we have $\rho(\mathcal{H}_1)=6$.

Let $\mathcal{H}_2$ be a $3$-uniform hypergraph with vertex set $V(\mathcal{H}_2)=[54]$ and $E(\mathcal{H}_2)=\{e_i:1\leq i \leq 64\}$, where
\begin{eqnarray*}
\begin{matrix}
e_1=\{1,2,3\}, & e_2=\{3,4,6\}, & e_3=\{3,4,5\}, & e_4=\{4,5,6\}, \\
e_5=\{1,2,6\}, & e_6=\{1,2,5\}, & e_7=\{1,3,4\}, & e_8=\{2,5,6\},\\
e_9=\{1,7,8\}, & e_{10}=\{1,8,9\}, & e_{11}=\{1,9,10\}, & e_{12}=\{1,10,11\},\\
e_{13}=\{2,11,12\}, & e_{14}=\{2,12,13\}, & e_{15}=\{2,13,14\}, & e_{16}=\{2,14,15\},\\
e_{17}=\{3,15,16\}, & e_{18}=\{3,16,17\}, & e_{19}=\{3,17,18\}, & e_{20}=\{3,18,19\}, \\
e_{21}=\{4,19,20\}, & e_{22}=\{4,20,21\}, & e_{23}=\{4,21,22\}, & e_{24}=\{4,22,23\}, \\
e_{25}=\{5,23,24\}, & e_{26}=\{5,24,25\}, & e_{27}=\{5,25,26\}, & e_{28}=\{5,26,27\}, \\
e_{29}=\{6,27,28\}, & e_{30}=\{6,28,29\}, & e_{31}=\{6,29,30\}, & e_{32}=\{6,30,7\}, \\
e_{33}=\{7,8,31\}, & e_{34}=\{7,8,32\}, & e_{35}=\{9,10,33\}, & e_{36}=\{9,10,34\}, \\
e_{37}=\{11,12,35\}, & e_{38}=\{11,12,36\}, & e_{39}=\{13,14,37\}, & e_{40}=\{13,14,38\},\\
e_{41}=\{15,16,39\}, & e_{42}=\{15,16,40\}, & e_{43}=\{17,18,41\}, & e_{44}=\{17,18,42\},\\
e_{45}=\{19,20,43\}, & e_{46}=\{19,20,44\}, & e_{47}=\{21,22,45\}, & e_{48}=\{21,22,46\},\\
e_{49}=\{23,24,47\}, & e_{50}=\{23,24,48\}, & e_{51}=\{25,26,49\}, & e_{52}=\{25,26,50\},\\
e_{53}=\{27,28,51\}, & e_{54}=\{27,28,52\}, & e_{55}=\{29,30,53\}, & e_{56}=\{29,30,54\}, \\
e_{57}=\{31,32,33\}, & e_{58}=\{34,35,36\}, & e_{59}=\{37,38,39\}, & e_{60}=\{40,41,42\}, \\
e_{61}=\{43,44,45\}, & e_{62}=\{46,47,48\}, & e_{63}=\{49,50,51\}, & e_{64}=\{52,53,54\}.
\end{matrix}
\end{eqnarray*}
By direct calculation, we have 
\[d_i= \begin{cases}
8& \text{if } 1\leq i\leq 6,\\
4 & \text{if } 7\leq i\leq 30,\\
2 & \text{if } 31\leq i\leq 54,
\end{cases}\]
and
\begin{eqnarray*}
m_i&=& \begin{cases}
\frac{(8\times 8)\times 4+(4\times 4)\times 4}{8\times8}& \text{if } 1\leq i\leq 6,\\
\frac{(8\times 4)\times 2+(4\times 3)\times 2}{4\times4} & \text{if } 7\leq i\leq 30,\\
\frac{4\times 4+2\times 2}{2\times2} & \text{if } 31\leq i\leq 54
\end{cases}\\
&=&5.
\end{eqnarray*}
By Theorem~\ref{Tensor5}, we have $\rho(\mathcal{H}_2)=5$.



Let $\mathcal{G}$ be a $k$-uniform hypergraph of order $n$  without isolated vertices with maximum degree $\Delta$ and  average $2$-degrees $m_1\geq \dots \geq m_n$. Note that $\mu(\mathcal{G})\le \Delta+\rho(\mathcal{G})$. By Theorem~\ref{Tensor5}, we have
Then $\mu(\mathcal{G})\leq m_1^{\frac{1}{k}}m_{2}^{1-\frac{1}{k}}+\Delta$.

If we  take $U=\mbox{diag} (d_1, \ldots, d_{n-1},yd_n)$
with $y=\left(\frac{m_n}{m_{n-1}}\right)^\frac{1}{k}$ in the proof of Theorem~\ref{Tensor5}, then $\rho(\mathcal{G})\geq m_n^{\frac{1}{k}}m_{n-1}^{1-\frac{1}{k}}$,
and if $\mathcal{G}$ is connected,  then equality holds if and only  if each vertex of $\mathcal{G}$ has the same average $2$-degree.

For a $k$-uniform hypergraph $\mathcal{G}$, if there is a disjoint partition of $V(\mathcal{G})$ as $V(\mathcal{G})=V_0\cup V_1\cup\dots\cup V_d$, where $|V_0|=1,|V_1|=\dots=|V_d|=k-1$, and $E(\mathcal{G})=\{V_0\cup V_i:i\in[d]\}$, then $\mathcal{G}$ is called a hyperstar, denoted by $\mathcal{S}_d^k$. The  vertex (of degree $d$) in $V_0$  is called the heart. Obviously, it is an isolated vertex if $d=0$.

For positive integers $d_1,\gamma$ and nonnegative integer $d_2$,  let $\mathcal{G}_{d_1,d_2,\gamma}$ be the $k$-uniform hypergraph obtained vertex-disjoint $\mathcal{S}_{d_1}^k$ and $\mathcal{S}_{d_2}^k$ by adding $\gamma(k-2)$ new vertices $v_{1,1},\dots ,v_{1,k-2},\dots,$ $v_{\gamma,1},\dots ,v_{\gamma,k-2} $ and $\gamma$ new edges $e_1,\dots,e_{\gamma}$,
 where $e_i=\{u,v,v_{i,1},\dots ,v_{i,k-2}\}$ for $ i\in[\gamma]$, and $u,v$ are the hearts of  $\mathcal{S}_{d_1}^k$ and $\mathcal{S}_{d_2}^k$, respectively. Obviously, if $d_2=0$ and $\gamma=1$, then $\mathcal{G}_{d_1,d_2,\gamma}\cong \mathcal{S}_{d_1+1}^k$.

Next we give a lower bound for $\rho(\mathcal{G})$ of a $k$-uniform hypergraph $\mathcal{G}$.

\begin{Theorem} \label{Tensor9}  Let $\mathcal{G}$ be a $k$-uniform  hypergraph  with $u\in V(\mathcal{G})$ of maximum degree $\Delta\ge 1$. Let $v$ be a  neighbor of $u$ with maximum degree. Then
\begin{equation} \label{star}
\rho(\mathcal{G})\geq \left(\frac{\Delta+\delta-2\gamma+\gamma^2+\sqrt{(\Delta-\delta)^2+\gamma^4+2(\Delta +\delta-2\gamma)\gamma^2}}{2}\right)^{\frac{1}{k}},
\end{equation}
where $\delta$ is the degree of $v$, and  $\gamma$ is the number of edges containing $u$ and $v$.
Moreover, if $\mathcal{G}$ is connected, then equality holds in (\ref{star})  if and only if $\mathcal{G}\cong \mathcal{G}_{\Delta,\delta,\gamma}$.
\end{Theorem}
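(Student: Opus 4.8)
The plan is to first compute $\rho(\mathcal{G}_{\Delta,\delta,\gamma})$ exactly and identify it with the right-hand side of (\ref{star}), and then to show that no hypergraph with the prescribed local data at $u$ and $v$ can do better. For the computation I would write a positive eigenvector $x$ of $\mathcal{A}(\mathcal{G}_{\Delta,\delta,\gamma})$ and use its symmetry: set $x_u=a$, $x_v=b$, give the common value $p$ to every leaf lying in an edge through $u$ only, the value $q$ to every leaf in an edge through $v$ only, and the value $c$ to each of the $k-2$ extra vertices of a common edge. Reading $\mathcal{A}x=\rho x^{[k-1]}$ at a leaf gives $p=a/\rho$ and $q=b/\rho$, and at an extra vertex of a common edge gives $c=(ab/\rho)^{1/2}$. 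Substituting these into the two equations at $u$ and at $v$ and clearing the powers of $\rho$ collapses everything (with $Y=\rho^{k}$ and $S=(b/a)^{k/2}$) to the pair $(\Delta-\gamma)+\gamma S\,Y^{1/2}=Y$ and $(\delta-\gamma)+\gamma S^{-1}Y^{1/2}=Y$. Multiplying these eliminates $S$ and yields $(Y-\Delta+\gamma)(Y-\delta+\gamma)=\gamma^{2}Y$, i.e. $Y^{2}-(\Delta+\delta-2\gamma+\gamma^{2})Y+(\Delta-\gamma)(\delta-\gamma)=0$; a short manipulation of the discriminant turns it into $(\Delta-\delta)^{2}+\gamma^{4}+2(\Delta+\delta-2\gamma)\gamma^{2}$, so the larger root of this quadratic is precisely the quantity under the $k$-th root in (\ref{star}). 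Checking that $a,b,c,p,q$ may be taken positive, Lemma~\ref{irreducibility} identifies this root with $\rho(\mathcal{G}_{\Delta,\delta,\gamma})^{k}$.

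For the lower bound itself I would first reduce to $\mathcal{G}$ connected: the edges at $u$ and $v$ all lie in one component $\mathcal{G}_0$, and $\rho(\mathcal{G})\ge\rho(\mathcal{G}_0)$ while $\Delta,\delta,\gamma$ are unchanged, so it suffices to treat connected $\mathcal{G}$. Let $\mathcal{H}\subseteq\mathcal{G}$ be the subhypergraph formed by all edges meeting $u$ or $v$. Using the Rayleigh characterization $x^{\top}(\mathcal{A}x)=k\sum_{e}\prod_{j\in e}x_{j}$ of Lemma~\ref{QLQ}, I would feed in the values $a,b,c,p,q$ found above, placed on $u$, $v$ and the outer vertices of the edges of $\mathcal{H}$ (and $0$ elsewhere). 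When the outer vertices of distinct edges of $\mathcal{H}$ are all distinct, $\mathcal{H}\cong\mathcal{G}_{\Delta,\delta,\gamma}$ and the quotient evaluates to exactly $\rho(\mathcal{G}_{\Delta,\delta,\gamma})$; together with $\rho(\mathcal{G})\ge\rho(\mathcal{H})$ from Lemma~\ref{subhypergraph} this gives (\ref{star}).

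For equality under connectedness I would argue as follows. If (\ref{star}) holds with equality then $\rho(\mathcal{H})=\rho(\mathcal{G})$, so Lemma~\ref{subhypergraph} forces $\mathcal{H}=\mathcal{G}$, i.e. every edge of $\mathcal{G}$ meets $u$ or $v$. Moreover the test vector must then be the Perron eigenvector of $\mathcal{A}(\mathcal{G})$, which by weak irreducibility (Lemma~\ref{connec}) is unique and positive; matching it against the eigenequation forces every outer vertex to be a leaf and the outer vertices of distinct edges to be distinct, so $\mathcal{G}\cong\mathcal{G}_{\Delta,\delta,\gamma}$. The converse direction is exactly the computation of the first paragraph.

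The hard part will be the inequality when the outer vertices of the edges at $u$ and $v$ are not all distinct, or carry further incidences inside $\mathcal{G}$. There $\mathcal{H}$ is no longer isomorphic to $\mathcal{G}_{\Delta,\delta,\gamma}$, and the clean term-by-term evaluation of the Rayleigh quotient breaks down because a single vertex may have to serve several incompatible roles at once. I expect to spend most of the effort proving a monotonicity statement—that identifying two leaves, or raising the degree or the number of incidences of an outer vertex, never decreases $\rho$—so that $\mathcal{G}_{\Delta,\delta,\gamma}$ is the unique $\rho$-minimizer in the family of connected hypergraphs with the given $\Delta,\delta,\gamma$. This monotonicity is precisely what upgrades the generic computation above to the full inequality and simultaneously makes the equality characterization tight.
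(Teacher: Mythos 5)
Your first paragraph is fine: the symmetric eigenvector ansatz on $\mathcal{G}_{\Delta,\delta,\gamma}$, the reduction to the quadratic in $Y=\rho^k$, and the discriminant manipulation all agree with the paper's own computation of $\rho(\mathcal{G}'_1)$. But the proposal has a genuine gap exactly where you say you ``expect to spend most of the effort'': the case in which the outer vertices of the edges through $u$ and $v$ coincide or carry further incidences. You reduce the whole theorem to an unproved monotonicity statement (identifying leaves never decreases $\rho$) and defer it. Since for a general $\mathcal{G}$ the local subhypergraph $\mathcal{H}$ is typically \emph{not} isomorphic to $\mathcal{G}_{\Delta,\delta,\gamma}$, that statement is not a loose end but the entire content of inequality (\ref{star}); as written, your argument only covers the degenerate situation in which all outer vertices are distinct leaves, and your equality discussion (``the test vector must then be the Perron eigenvector \dots matching it against the eigenequation forces \dots'') likewise rests on machinery you have not supplied.

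The paper closes precisely this gap with a short device worth knowing. Let $\mathcal{G}_1$ be the subhypergraph formed by the $\Delta+\delta-\gamma$ edges meeting $u$ or $v$, and let $\mathcal{G}'_1\cong\mathcal{G}_{\Delta,\delta,\gamma}$ be its disjointified model; there is a natural surjection $\sigma\colon V(\mathcal{G}'_1)\to V(\mathcal{G}_1)$ carrying $e'_i$ onto $e_i$. Taking the positive Perron vector $x$ of $\mathcal{A}(\mathcal{G}'_1)$, define $y$ on $V(\mathcal{G}_1)$ by $y_w=\max_{w'\in\sigma^{-1}(w)}x_{w'}$. Then edge by edge $\prod_{w\in e_i}y_w\ge\prod_{w'\in e'_i}x_{w'}$, so $y^\top(\mathcal{A}(\mathcal{G}_1)y)\ge x^\top(\mathcal{A}(\mathcal{G}'_1)x)$, while $\|y\|_k^k\le\|x\|_k^k=1$ because the fibres $\sigma^{-1}(w)$ partition $V(\mathcal{G}'_1)$. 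By Lemma~\ref{QLQ} this gives $\rho(\mathcal{G}_1)\ge y^\top(\mathcal{A}(\mathcal{G}_1)y)/\|y\|_k^k\ge\rho(\mathcal{G}'_1)$, which is exactly your missing monotonicity claim, and Lemma~\ref{subhypergraph} then yields (\ref{star}) in general. The same device settles equality cleanly: equality forces $\|y\|_k=1$, and since $x>0$ every fibre must be a singleton, i.e.\ $\mathcal{G}_1\cong\mathcal{G}'_1$; Lemma~\ref{subhypergraph} then gives $\mathcal{G}=\mathcal{G}_1\cong\mathcal{G}_{\Delta,\delta,\gamma}$, with no appeal to uniqueness of the Perron vector. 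Incorporating this max-over-preimages argument (or an honest proof of your monotonicity statement, which amounts to the same thing) is what your proposal still needs.
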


\begin{proof}
Let $e_1, \dots, e_{\Delta}$ be the $\Delta$ edges of $\mathcal{G}$ containing  $u$. Among these edges, $\gamma$ of them, say  $e_1, \dots, e_{\gamma}$, contain $v$.
Let $e_{\Delta+1}, \dots, e_{\Delta+\delta-\gamma}$ be the $\delta-\gamma$ edges of $\mathcal{G}$ containing  $v$  different from $e_1, \dots, e_{\gamma}$.
Let $\mathcal{G}_1$ be the  subhypergraph of $\mathcal{G}$ induced by $\{e_1, \dots, e_{\Delta+\delta-\gamma}\}$. Then $V(\mathcal{G}_1)=\cup_{i=1}^{\Delta+\delta-\gamma}e_{i}$.
For $1\leq i\leq \Delta+\delta-\gamma$, let
$e_{i}=\{v_{i,1},\dots,v_{i,k}\}$, where $v_{i,1}=u$ and $v_{i,2}=v$ if  $1\leq i\leq \gamma$, $v_{i,1}=u$ if $\gamma+1\leq i\leq \Delta$, and $v_{i,1}=v$ if $\Delta+1\leq i\leq \Delta+\delta-\gamma$.
Note that maybe some of $v_{i,s}$ and $v_{j,t}$ for $1\leq s, t\leq k$ and  $1\leq i<j\leq \Delta+\delta-\gamma$  with $v_{i,s}, v_{j,t}\ne u,v$
represent the same vertex.

Let $\mathcal{G}'_1$ be a new hypergraph such that $V(\mathcal{G}'_1)=\cup_{i=1}^{\Delta+\delta-\gamma}e'_{i}$ and
$E(\mathcal{G}'_1)=\{e'_1, \dots, e'_{\Delta+\delta-\gamma}\}$, where
 $e'_{i}=\{v'_{i,1},\dots,v'_{i,k}\}$ with  $v'_{i,1}=u$ and $v'_{i,2}=v$ if $i=1, \dots, \gamma$,
$v'_{i,1}=u$ if $\gamma+1\leq i\leq \Delta$, and $v'_{i,1}=v$ if $\Delta+1\leq i\leq \Delta+\delta-\gamma$.
Note that $v\notin e_i$ for  $\gamma+1\leq i\leq \Delta$, $u\notin e_i$ for $\Delta+1\leq i\leq \Delta+\delta-\gamma$, and
$v'_{i,s}$  and $v'_{j,t}$ for $1\leq s, t\leq k$ and  $1\le i<j\le \Delta+\delta-\gamma$ with $v'_{i,s}, v'_{j,t}\ne u,v$ are different vertices.
Obviously, $\mathcal{G}'_1\cong \mathcal{G}_{\Delta,\delta,\gamma}$

By Lemma~\ref{irreducibility}, there is a unit positive eigenvector $x$ of $\mathcal{A}(\mathcal{G}'_1)$ corresponding to $\rho(\mathcal{G}'_1)$, in which the entry at $v'_{i,s}$ is denoted by $x_{i,s}$, where $1\leq i\leq \Delta+\delta-\gamma$ and $1\leq s\leq k$.  Then  $\rho(\mathcal{G}'_1)=x^\top (\mathcal{A}(\mathcal{G}'_1)x)$.
Let $w$ be any vertex of $\cup_{i=\Delta-\gamma+1}^{\Delta}e_{i}\setminus\{u\}$.
Since $\rho(\mathcal{G}'_1)x_w^{k-1}=x_u x^{k-2}_w$, we have $x_w=\frac{ x_u}{\rho(\mathcal{G}'_1)}$.
Thus the entry of $x$ at each vertex of $\cup_{i=\Delta-\gamma+1}^{\Delta}e_{i}\setminus\{u\}$ is the same, denoted by $a$.
Similarly, the entry of $x'$ at each vertex of $\cup_{i=1}^{\gamma}e_{i}\setminus\{u,v\}$ is the same, denoted by $b$,
and the entry of $x'$ at each vertex of
$\cup_{i=\Delta+1}^{\Delta+\delta-\gamma}e_{i}\setminus\{v\}$ is the same, denoted by $c$.
Then
\begin{eqnarray*}
\rho(\mathcal{G}'_1)a^{k-1}&=&  x_u a^{k-2},\\
\rho(\mathcal{G}'_1)x^{k-1}_u&=&  (\Delta-\gamma) a^{k-1}+\gamma b^{k-2}x_v,\\
\rho(\mathcal{G}'_1)b^{k-1}&=&  x_ux_vb^{k-3},\\
\rho(\mathcal{G}'_1)x^{k-1}_v&=& (\delta-\gamma) c^{k-1}+\gamma b^{k-2}x_u, \\
\rho(\mathcal{G}'_1)c^{k-1}&=& x_vc^{k-2}.
\end{eqnarray*}
Thus $\rho(\mathcal{G}'_1)$ is the largest root of the equation $f(\rho)=0$, where $f(\rho)=(\rho^k-\Delta+\gamma)\left(\rho^{2k}-(\Delta+\delta-2\gamma+\gamma^2)\rho^k+(\Delta-\gamma)(\delta-\gamma)\right)$.
It follows that \[\rho(\mathcal{G}'_1)=\left(\frac{\Delta+\delta-2\gamma+\gamma^2+\sqrt{(\Delta-\delta)^2+\gamma^4+2(\Delta +\delta-2\gamma)\gamma^2}}{2}\right)^{\frac{1}{k}}.\]

Construct a surjection $\sigma$ from $V(\mathcal{G}'_1)$ to $V(\mathcal{G}_1)$ such that $\sigma(v'_{i,s})=v_{i,s}$ for $1\leq i\leq \Delta +\delta-\gamma$ and  $1\leq s\leq k$.  Let $y=(y_1,\dots,y_{|V(\mathcal{G}_1)|})^{\top}$ such that $y_{i}=\max_{v'_{j,s}\in\sigma^{-1}(i)}\{x_{j,s}\}$ for $1\leq i\leq |V(\mathcal{G}_1)|$.
Obviously, $\|y\|_{k}\leq \|x\|_{k}=1$. Let $z=\frac{y}{\|y\|_{k}}$. Then $\|z\|_{k}=1$.
By Lemma~\ref{QLQ},
\begin{equation} \label{med}
\rho(\mathcal{G}_1)\geq z^\top (\mathcal{A}(\mathcal{G}_1)z) =\frac{y^\top (\mathcal{A}(\mathcal{G}_1)y)}{\|y\|_{k}^k}\geq\frac{x^\top (\mathcal{A}(\mathcal{G}'_1)x)}{\|x\|_{k}^k}= x^\top (\mathcal{A}(\mathcal{G}'_1)x)
=\rho(\mathcal{G}'_1).
\end{equation}
Since $\mathcal{G}_1$ is a subhypergraph of $\mathcal{G}$, we have by Lemma~\ref{subhypergraph} that $\rho(\mathcal{G})\geq\rho(\mathcal{G}_1)$.
Thus
 \[\rho(\mathcal{G})\geq\rho(\mathcal{G}'_1)= \left(\frac{\Delta+\delta-2\gamma+\gamma^2+\sqrt{(\Delta-\delta)^2+\gamma^4+2(\Delta +\delta-2\gamma)\gamma^2}}{2}\right)^{\frac{1}{k}}.\]

If $\mathcal{G}\cong \mathcal{G}_{\Delta,\delta,\gamma}$, then by the above proof, equality holds in (\ref{star}).

Suppose that $\mathcal{G}$ is connected and  equality holds in (\ref{star}). Then all equalities hold in (\ref{med}) and $\rho(\mathcal{G})=\rho(\mathcal{G}_1)$.
Thus by the construction of $\mathcal{G}'_1$, we have $\mathcal{G}_1 \cong \mathcal{G}'_1$. Otherwise, $|V(\mathcal{G}_1)| <|V(\mathcal{G}'_1)|$, and then $\|y\|_{k}< \|x\|_{k}=1$, a contradiction.
By Lemma~\ref{subhypergraph}, we have $\mathcal{G}=\mathcal{G}_1$. Thus $\mathcal{G}\cong \mathcal{G}_{\Delta,\delta,\gamma}$.
\end{proof}

Let $\mathcal{G}$ be a $k$-uniform  hypergraph  with maximum degree $\Delta\ge 1$. Let $f(\Delta, \delta, \gamma)$ be the  lower bound in (\ref{star}). For $\gamma\le \delta\le \Delta$, $f(\Delta, \delta, \gamma)$ is a increasing  function at $\delta$. Note that $\gamma\ge 1$.
By  Theorem~\ref{Tensor9},
$\rho(\mathcal{G})\geq f(\Delta, 1,1)=\Delta^{\frac{1}{k}}$,
and  if $\mathcal{G}$ is connected, then equality holds if and only if $\mathcal{G}$ is a hyperstar. Moreover, if $\mathcal{G}$ is connected and is not a hyperstar, then $\rho(\mathcal{G})\geq f(\Delta, 2,1)= \left(\frac{\Delta+1+\sqrt{\Delta^2-2\Delta+5}}{2}\right)^{\frac{1}{k}}$ with equality if and only if $\mathcal{G}\cong \mathcal{G}_{\Delta,2,1}$.

In the following, we give upper bounds for $\mu(\mathcal{G})$ of a $k$-uniform hypergraph.

\begin{Theorem} \label{Mo} Let $\mathcal{G}$ be a $k$-uniform  hypergraph on $n$ vertices with degree sequence $d_{1}\geq \dots \geq d_{n}$.  Let $d^*=1$ if $d_1=d_2$ and $d^*$ be  a root of $h(t)=0$ in $((\frac{d_{1}}{d_{2}})^{\frac{1}{k}},\frac{d_{1}}{d_{2}})$ if $d_1>d_2$, where $h(t)=d_{2}t^{k}+(d_{2}-d_{1})t^{k-1}-d_{1}$. Then
\begin{equation} \label{MZh}
\mu(\mathcal{G})\leq d_{1}+d_{1}\left(\frac{1}{d^*}\right)^{k-1}.
\end{equation}
 Moreover, if $\mathcal{G}$ is connected, then equality holds in (\ref{MZh}) if and only if $\mathcal{G}$ is a regular hypergraph or the blow-up hypergraph of a regular $(k-1)$-uniform hypergraph on $n-1$ vertices.
\end{Theorem}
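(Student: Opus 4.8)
The plan is to mimic the diagonal-similarity argument of Theorem~\ref{Tensor5}, but applied to $\mathcal{Q}=\mathcal{Q}(\mathcal{G})$ with a transformation that scales only the vertex of maximum degree. Label the vertices so that vertex $1$ has degree $d_1$. First I would dispose of the case $d_1=d_2$: here $d^*=1$, the claimed bound is $2d_1$, and since $r_i(\mathcal{A})=d_i$ we have $r_i(\mathcal{Q})=2d_i$ for every $i$, so Lemma~\ref{Tensor2} gives $\mu(\mathcal{G})=\rho(\mathcal{Q})\le 2d_1$ at once, with equality (for connected $\mathcal{G}$) if and only if all degrees coincide, i.e. $\mathcal{G}$ is regular. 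Note that the blow-up family cannot occur here, since in $\mathcal{H}^1$ the common vertex has strictly larger degree than the others, so $d_1>d_2$ there.

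For the main case $d_1>d_2$ (so vertex $1$ is the unique vertex of maximum degree and $d_i\le d_2$ for $i\ne1$), set $U=\mathrm{diag}(t,1,\dots,1)$ with $t=d^*>1$ and $\mathcal{B}=U^{-(k-1)}\mathcal{Q}U$. Then $\mathcal{B}$ is nonnegative and diagonal similar to $\mathcal{Q}$, so by Lemmas~\ref{Tensor1} and~\ref{irreducibility} (Perron--Frobenius applied to both $\mathcal{Q}$ and $\mathcal{B}$, whose real spectra agree) we get $\rho(\mathcal{Q})=\rho(\mathcal{B})$, exactly as in Theorem~\ref{Tensor5}. Since diagonal similarity fixes the diagonal of $\mathcal{Q}$, a direct row-sum computation gives $r_1(\mathcal{B})=d_1+d_1t^{-(k-1)}$, because each of the $d_1$ edges through vertex $1$ carries the factor $t^{-(k-1)}$ from the scaling of vertex $1$ while its remaining vertices are unscaled; and for $i\ne1$, writing $\gamma_i$ for the number of edges containing both $i$ and $1$, one obtains $r_i(\mathcal{B})=2d_i+\gamma_i(t-1)$. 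The key identity is that dividing $h(t)=0$ by $t^{k-1}$ yields $d_2(t+1)=d_1(1+t^{-(k-1)})$; combined with $\gamma_i\le d_i\le d_2$ and $t>1$ this gives $r_i(\mathcal{B})\le d_i(t+1)\le d_2(t+1)=r_1(\mathcal{B})$. Hence $\max_i r_i(\mathcal{B})=d_1+d_1(1/d^*)^{k-1}$, and Lemma~\ref{Tensor2} delivers (\ref{MZh}). I would also record the elementary check that $h$ has a root $d^*$ in the stated interval: $h(0)=-d_1<0$, $h\to+\infty$, and one computes $h\bigl((d_1/d_2)^{1/k}\bigr)=(d_2-d_1)(d_1/d_2)^{(k-1)/k}<0$ and $h(d_1/d_2)=d_2\tfrac{d_1}{d_2}\bigl((d_1/d_2)^{k-2}-1\bigr)\ge0$, so an intermediate value exists; it is unique on the positive axis since $h'(t)=t^{k-2}\bigl(kd_2t+(k-1)(d_2-d_1)\bigr)$ changes sign once, making $h$ unimodal.

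For the equality statement with $\mathcal{G}$ connected, Lemma~\ref{connec} makes $\mathcal{Q}$, hence $\mathcal{B}$, weakly irreducible, so Lemma~\ref{Tensor2} says equality in (\ref{MZh}) is equivalent to $r_1(\mathcal{B})=\cdots=r_n(\mathcal{B})$. Tracing back the chain $r_i(\mathcal{B})=2d_i+\gamma_i(t-1)\le d_i(t+1)\le d_2(t+1)$, equality forces $\gamma_i=d_i$ and $d_i=d_2$ for every $i\ne1$. The condition $\gamma_i=d_i$ for all such $i$ means every edge of $\mathcal{G}$ contains vertex $1$, while $d_i=d_2$ says all remaining vertices have equal degree; deleting vertex $1$ from every edge then exhibits $\mathcal{G}$ as the blow-up $\mathcal{H}^1$ of a $(k-1)$-uniform hypergraph $\mathcal{H}$ that is regular of degree $d_2$. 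Conversely, for such a blow-up (and, in the case $d_1=d_2$, for a regular $\mathcal{G}$) the same computation shows all row sums of $\mathcal{B}$ coincide at the value $d_1+d_1(1/d^*)^{k-1}$, so equality holds.

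I expect the equality characterization to be the only real obstacle: the upper bound itself is a short diagonal-similarity computation, but isolating the two structural conditions from the chained inequalities, recognizing exactly the blow-up family $\mathcal{H}^1$ with $\mathcal{H}$ regular, and checking the consistency split (the blow-up case forces $d_1>d_2$ while the regular case sits in $d_1=d_2$) all require care. A secondary point to handle cleanly is the identity $\rho(\mathcal{Q})=\rho(\mathcal{B})$, since $\rho$ is a priori only the largest modulus of possibly complex eigenvalues; this is resolved exactly as in Theorem~\ref{Tensor5} by invoking \emph{Perron--Frobenius} for the nonnegative tensors $\mathcal{Q}$ and $\mathcal{B}$ together with the shared real spectrum from Lemma~\ref{Tensor1}.
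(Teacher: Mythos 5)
Your proof is correct and takes essentially the same route as the paper's: the diagonal similarity $U=\mathrm{diag}(t,1,\dots,1)$ applied to $\mathcal{Q}$ with $t=d^*$, the row-sum computations $r_1(\mathcal{T})=d_1+d_1(d^*)^{-(k-1)}$ and $r_i(\mathcal{T})=2d_i+\gamma_i(t-1)\le (t+1)d_i\le(t+1)d_2$, the identity obtained by dividing $h(d^*)=0$ by $(d^*)^{k-1}$, and Lemma~\ref{Tensor2} for both the bound (\ref{MZh}) and the equality characterization via $r_1(\mathcal{T})=\cdots=r_n(\mathcal{T})$. Your extra observations (uniqueness of the root $d^*$, and the consistency split between the regular and blow-up cases) are harmless refinements of the same argument.
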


\begin{proof} Let $\mathcal{Q}=\mathcal{Q}(\mathcal{\mathcal{G}})$, $\mathcal{A}=\mathcal{A}(\mathcal{\mathcal{G}})$, and $\mathcal{D}=\mathcal{D}(\mathcal{\mathcal{G}})$.

If $d_{1}=d_{2}$, then $d^*=1$, and by Lemma~\ref{Tensor2}, we have
\[
\mu(\mathcal{G})=\rho(\mathcal{Q})\leq\max_{1\leq i\leq n}r_{i}(\mathcal{Q})=\max_{1\leq i\leq n}2d_{i}=2d_{1}= d_{1}+d_{1}\left(\frac{1}{d^*}\right)^{k-1},
\]
and when $\mathcal{G}$ is connected, we have by Lemma~\ref{connec} that $\mathcal{Q}$ is weakly irreducible, and thus equality holds  if and only if $r_1(\mathcal{Q})=\dots =r_n(\mathcal{Q})$, i.e., $\mathcal{G}$ is a regular hypergraph.

Suppose in the following that $d_{1}> d_{2}$. Let $U=\mbox{diag}(t,1,\dots,1)$ be an $n\times n$ diagonal matrix, where $t>1$ is a variable to be determined later. Let $\mathcal{T}=U^{-(k-1)}\mathcal{Q}U$. By Lemma~\ref{Tensor1}, $\mathcal{Q}$ and $\mathcal{T}$ have the same real eigenvalues. By Lemma~\ref{irreducibility}, $\rho(\mathcal{Q})$ is an eigenvalue of $\mathcal{Q}$  and $\rho(\mathcal{T})$ is an eigenvalue of $\mathcal{T}$. Thus $\mu(\mathcal{G})=\rho(\mathcal{Q})=\rho(\mathcal{T})$.
We have
\begin{eqnarray*}
r_{1}(\mathcal{T})&=&\sum_{i_{2},\ldots,i_{k}\in[n]}T_{1i_{2}\dots i_{k}}\\
&=&\sum_{i_{2},\ldots,i_{k}\in[n]}U_{11}^{-(k-1)}A_{1i_{2}\ldots i_{k}}U_{i_{2}i_{2}}\cdots U_{i_{k}i_{k}}+D_{1\dots 1}\\
&=&\sum_{i_{2},\ldots,i_{k}\in[n]\setminus\{1\}}\frac{1}{t^{k-1}}A_{1i_{2}\ldots i_{k}}+d_{1}\\
&=&\frac{d_{1}}{t^{k-1}}+d_{1}.
\end{eqnarray*}
For $i\in [n]\setminus \{1\}$, let
$d_{1,i}=|\{e: 1,i\in e\in E(\mathcal{G})\}|$. Obviously, $d_{1,i}\le d_i$.
For $2\leq i\leq n$, we have
\begin{eqnarray*}
r_{i}(\mathcal{T})&=&\sum_{i_{2},\dots,i_{k}\in[n]}T_{ii_{2}\dots i_{k}}\\
&=&\sum_{i_{2},\dots,i_{k}\in[n]}U_{ii}^{-(k-1)}A_{ii_{2}\ldots i_{k}}U_{i_{2}i_{2}}\cdots U_{i_{k}i_{k}}+D_{i\dots i}\\
&=&\sum_{i_{2},\dots,i_{k}\in[n]\atop 1\in\{i_{2},\ldots,i_{k}\}}U_{ii}^{-(k-1)}A_{ii_{2}\ldots i_{k}}U_{i_{2}i_{2}}\cdots U_{i_{k}i_{k}}\\
&&+\sum_{i_{2},\dots,i_{k}\in[n]\atop 1\not\in\{i_{2},\ldots,i_{k}\}}U_{ii}^{-(k-1)}A_{ii_{2}\ldots i_{k}}U_{i_{2}i_{2}}\cdots U_{i_{k}i_{k}}+d_{i}\\
&=&\sum_{i_{2},\dots,i_{k}\in[n]\atop 1\in\{i_{2},\ldots,i_{k}\}}A_{ii_{2}\ldots i_{k}}t+\sum_{i_{2},\dots,i_{k}\in[n]\atop 1\not\in\{i_{2},\ldots,i_{k}\}}A_{ii_{2}\ldots i_{k}}+d_{i}\\
&=&td_{1,i}+d_{i}-d_{1,i}+d_i\\
&\leq& (t+1)d_{i}\\
&\leq& (t+1)d_{2}
\end{eqnarray*}
with equality if and only if $d_i=d_{1,i}$ and $d_i=d_2$.

Note that $h((\frac{d_{1}}{d_{2}})^{\frac{1}{k}})=(d_2-d_1)\left(\frac{d_1}{d_2}\right)^{1-\frac{1}{k}}<0$ and $h((\frac{d_{1}}{d_{2}}))=d_1\left(\left(\frac{d_1}{d_2}\right)^{k-2}-1\right)>0$. Thus  $h(t)=0$ does have a root  $d^*$ in $((\frac{d_{1}}{d_{2}})^{\frac{1}{k}},\frac{d_{1}}{d_{2}})$. Let  $t=d^*$. Then  $t>1$. We have
\[
r_{1}(\mathcal{T})=d_{1}+d_{1}\left(\frac{1}{d^*}\right)^{k-1},
\]
and for $2\leq i\leq n$,
\[
r_{i}(\mathcal{T})\leq d_{1}+d_{1}\left(\frac{1}{d^*}\right)^{k-1}.
\]
Thus  by Lemma~\ref{Tensor2},
\[
\mu(\mathcal{G})=\rho(\mathcal{T})\leq \max_{1\leq i\leq n}r_{i}(\mathcal{T})=d_{1}+d_{1}\left(\frac{1}{d^*}\right)^{k-1}.
\]
This proves (\ref{MZh}).

Now suppose that $\mathcal{G}$ is connected. By Lemma~\ref{connec}, $\mathcal{Q}$ is weakly irreducible, and so is $\mathcal{T}$.

If  equality holds in (\ref{MZh}), then by Lemma~\ref{Tensor2}, $r_1(\mathcal{T})=\cdots=r_n(\mathcal{T})$, and thus  from the above arguments,  we have $d_{1,i}=d_{i}$ for $i=2, \dots, n$ (implying that each edge of $\mathcal{G}$ contains vertex $1$), $d_{2}=\cdots=d_{n}$, and thus  $\mathcal{G}$ is a blow-up hypergraph of a regular $(k-1)$-uniform hypergraph on $n-1$ vertices of degree $d_2$.

Conversely,  if $\mathcal{G}=\mathcal{H}^1$, where $\mathcal{H}$ is a regular $(k-1)$-uniform hypergraph on $n-1$ vertices of degree $d_2$, then by the above arguments, we have   $r_{i}(\mathcal{T})=d_{1}+d_{1}(\frac{1}{d^*})^{k-1}$ for $1\leq i\leq n$, and thus by Lemma~\ref{Tensor2},
 $\mu(\mathcal{G})=\rho(\mathcal{Q})=\rho(\mathcal{T})=d_{1}+d_{1}\left(\frac{1}{d^*}\right)^{k-1}$.
\end{proof}

 Let $\mathcal{G}$ be a  $k$-uniform hypergraph on $n$ vertices with degree sequence $d_1\ge \cdots \ge d_n$. If $d_1>d_2$, then $d_{1}+d_{1}(\frac{1}{d^*})^{k-1}<d_{1}+d_{1}(\frac{d_2}{d_1})^{1-\frac{1}{k}}
=d_{1}+d_{1}^{\frac{1}{k}}d_{2}^{1-\frac{1}{k}}$.  By Theorem~\ref{Mo}, we have
\[
\mu(\mathcal{G})\le d_1+d_1^{\frac{1}{k}}d_2^{1-\frac{1}{k}},
\]
and if $\mathcal{G}$ is connected, then equality holds  if and only if $\mathcal{G}$ is a regular hypergraph, see  \cite{YZL}.


\begin{Theorem}\label{Tensor7}
Let $\mathcal{G}$ be a $k$-uniform hypergraph on $n$ vertices  without isolated vertices with average $2$-degrees $m_1\geq \dots \geq m_n$,  and degree sequence $d_1, \dots, d_n$.
Then
\begin{equation} \label{Final}
\mu(\mathcal{G})\leq \min_{1\leq j\leq n}\max\left\{m_1^{\frac{1}{k}}m_{j}^{1-\frac{1}{k}}+d_1, \theta_j\right\},
\end{equation}
where \[
\theta_j =\max\left\{ m_1^{\frac{1}{k}}m_{i}m_j^{-\frac{1}{k}}+d_i :2\leq i\leq n\right\}.
\]

\end{Theorem}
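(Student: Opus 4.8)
The plan is to mimic the diagonal-similarity argument of Theorem~\ref{Tensor5}, now applied to the signless Laplacian tensor $\mathcal{Q}=\mathcal{D}+\mathcal{A}$, deriving the bound for each index $j$ separately and then minimizing over $j$. Fix $j\in[n]$ and set $t=\left(\frac{m_1}{m_j}\right)^{\frac{1}{k}}$; since $m_1=\max_i m_i\ge m_j$ we have $t\ge 1$, and the absence of isolated vertices guarantees every $d_i>0$. Let $U=\mbox{diag}(td_1,d_2,\dots,d_n)$ and $\mathcal{T}=U^{-(k-1)}\mathcal{Q}U$. Then $\mathcal{Q}$ and $\mathcal{T}$ are diagonal similar and $\mathcal{T}$ is nonnegative, so Lemmas~\ref{Tensor1} and~\ref{irreducibility} give $\mu(\mathcal{G})=\rho(\mathcal{Q})=\rho(\mathcal{T})$, exactly as in the earlier proofs. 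I would then bound $\rho(\mathcal{T})$ by its largest row sum via Lemma~\ref{Tensor2}.

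Next I would compute the row sums of $\mathcal{T}$, separating the diagonal tensor $\mathcal{D}$ from the adjacency tensor $\mathcal{A}$. Because $\mathcal{D}$ is diagonal, the similarity fixes each diagonal entry and contributes exactly $d_i$ to $r_i(\mathcal{T})$. The adjacency contribution reproduces the computation in Theorem~\ref{Tensor5}: the factor $\mbox{diag}(d_1,\dots,d_n)$ turns the row sums of $\mathcal{A}$ into the average $2$-degrees $m_i$, while the extra scalar $t$ on the first coordinate rescales precisely those edges of $E_i$ passing through vertex $1$. Writing $m_{1,i}=m_i-\frac{\sum_{1\notin\{i,i_2,\dots,i_k\}\in E_i}d_{i_2}\cdots d_{i_k}}{d_i^{k-1}}$ as before, this yields
\[
r_1(\mathcal{T})=\frac{m_1}{t^{k-1}}+d_1,\qquad r_i(\mathcal{T})=tm_{1,i}+(m_i-m_{1,i})+d_i\le tm_i+d_i\quad(2\le i\le n),
\]
where the inequality uses $m_{1,i}\le m_i$ together with $t\ge 1$.

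Substituting $t=\left(\frac{m_1}{m_j}\right)^{\frac{1}{k}}$ gives $r_1(\mathcal{T})=m_1^{\frac{1}{k}}m_j^{1-\frac{1}{k}}+d_1$ and, for $2\le i\le n$, $r_i(\mathcal{T})\le m_1^{\frac{1}{k}}m_im_j^{-\frac{1}{k}}+d_i\le\theta_j$. By Lemma~\ref{Tensor2},
\[
\mu(\mathcal{G})=\rho(\mathcal{T})\le\max_{1\le i\le n}r_i(\mathcal{T})\le\max\left\{m_1^{\frac{1}{k}}m_j^{1-\frac{1}{k}}+d_1,\ \theta_j\right\}.
\]
Since $j\in[n]$ was arbitrary, taking the minimum over $j$ delivers~(\ref{Final}). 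I expect no serious obstacle: the whole argument is a bookkeeping variant of Theorem~\ref{Tensor5} carrying the diagonal degree term along, so the only real content is recognizing that the choice $t=\left(\frac{m_1}{m_j}\right)^{\frac{1}{k}}$ is exactly what matches $r_1(\mathcal{T})$ to $m_1^{\frac{1}{k}}m_j^{1-\frac{1}{k}}+d_1$ and the remaining row sums to the terms defining $\theta_j$. The one point needing care is the estimate $r_i(\mathcal{T})\le tm_i+d_i$, which requires $t\ge 1$ and hence relies on $m_1$ being the maximum average $2$-degree; this is automatic. Note also that the statement asserts no equality characterization, so no sharpness analysis is required.
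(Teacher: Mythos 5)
Your proposal is correct and follows essentially the same route as the paper: diagonal similarity of $\mathcal{Q}$ via $U=\mbox{diag}(td_1,d_2,\dots,d_n)$, the row-sum computation splitting off the diagonal contribution $d_i$ and the term $m_{1,i}$, the choice $t=\left(\frac{m_1}{m_j}\right)^{\frac{1}{k}}$, and Lemma~\ref{Tensor2} applied to the row sums, followed by minimizing over $j$. If anything, your presentation (fixing $j$ first, then minimizing at the very end) is slightly cleaner than the paper's, which states the minimum over $j$ inside a bound on $r_i(\mathcal{T})$ even though $\mathcal{T}$ itself depends on $j$.
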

\begin{proof} Let $\mathcal{Q}=\mathcal{Q}(\mathcal{G})$.
Let $U$ be a diagonal matrix $\mbox{diag} (td_1, d_2,\dots, d_n)$, where $t\geq1$ is a variable to be determined later.
Let $\mathcal{T}=U^{-(k-1)}\mathcal{Q}U$.
Then $\mathcal{Q}$ and $\mathcal{T}$ are diagonal similar. By Lemma~\ref{Tensor1}, $\mu(\mathcal{G})=\rho(\mathcal{T})$.
Obviously,
\[T_{i_1\dots i_k}=U^{-(k-1)}_{i_1i_1}Q_{i_1\dots i_k}U_{i_2i_2}\cdots U_{i_ki_k}.\]
for $i_1,\dots, i_k\in[n]$.
Then
\begin{eqnarray*}
r_1(\mathcal{T})
&=&\sum_{i_2,\dots, i_k\in[n]}U^{-(k-1)}_{11}Q_{1i_2\dots i_k}U_{i_2i_2}\cdots U_{i_ki_k}\\
&=&\frac{\sum_{i_2,\dots, i_k\in[n]\setminus\{1\}}Q_{1i_2\dots i_k}d_{i_2}\cdots d_{i_k}}{(t d_1)^{k-1}}\\
&=&\frac{D_{1\dots 1}(td_1)^{k-1}}{(t d_1)^{k-1}}+\frac{\sum_{i_2,\dots, i_k\in[n]}A_{1i_2\dots i_k}d_{i_2}\cdots d_{i_k}}{(t d_1)^{k-1}}\\
&=&d_1+\frac{\sum_{\{1,i_2,\dots, i_k\}\in E_1}d_{i_2}\cdots d_{i_k}}{(t d_1)^{k-1}}\\
&=&d_1+\frac{m_1}{t^{k-1}}.
\end{eqnarray*}
For $i=2, \dots, n$, let \[m_{1,i}=m_i-\frac{\sum_{1\notin\{i,i_2,\dots,i_k\}\in E_i}d_{i_2}\dots d_{i_k}}{d_i^{k-1}},\]
and then
\begin{eqnarray*}
r_i(\mathcal{T})
&=&\sum_{i_2,\dots, i_k\in[n]}U^{-(k-1)}_{ii}Q_{ii_2\dots i_k}U_{i_2i_2}\cdots U_{i_ki_k}\\
&=&\sum_{i_2,\dots, i_k\in[n]\atop 1\in\{i_2,\dots, i_k\}}d_i^{-(k-1)}Q_{ii_2\dots i_k}d_{i_2}\cdots d_{i_k}\\
&&+\sum_{i_2,\dots, i_k\in[n]\atop 1\notin \{i_2,\ldots , i_k\}}d_i^{-(k-1)}Q_{ii_2\dots i_k}d_{i_2}\cdots d_{i_k}\\
&=&\frac{D_{i\dots i}d_i^{k-1}}{ d_i^{k-1}}+\frac{\sum_{i_2,\dots, i_k\in[n]\atop 1\in\{i_2,\dots, i_k\}}A_{ii_2\dots i_k}d_{i_2}\cdots d_{i_k}}{d_i^{k-1}}\\
&&+\frac{\sum_{i_2,\dots, i_k\in[n]\atop 1\notin\{i_2,\dots, i_k\}}A_{ii_2\dots i_k}d_{i_2}d_{i_2}\cdots d_{i_k}}{d_i^{k-1}}\\
&=&d_i+\frac{\sum_{\{i,1,i_3,\dots, i_k\}\in E_i}(td_1)d_{i_3}\cdots d_{i_k}}{d_i^{k-1}}\\
&&+\frac{\sum_{1\notin\{i,i_2,\dots,i_k\}\in E_i}d_{i_2}\cdots d_{i_k}}{d_i^{k-1}}\\
&=&d_i+ tm_{1,i}+m_i-m_{1,i}\\
&\leq&d_i+ tm_i.
\end{eqnarray*}
For an arbitrary fixed $j$ with  $1\leq j\leq n$, let $t=\left(\frac{m_1}{m_j}\right)^\frac{1}{k}$. Obviously, $t\geq 1$.
Then \[r_1(\mathcal{T})=m_1^{\frac{1}{k}}m_{j}^{1-\frac{1}{k}}+d_1,\]
for $2\leq i\leq n$, \[r_i(\mathcal{T})\leq tm_i+d_i=m_1^{\frac{1}{k}}m_{i}m_j^{-\frac{1}{k}}+d_i.\]

Let $\theta_j =\max\left\{ m_1^{\frac{1}{k}}m_{i}m_j^{-\frac{1}{k}}+d_i :2\leq i\leq n\right\}$.
Thus for $1\leq i\leq n$, we have  \[r_i(\mathcal{T})\leq\max\left\{m_1^{\frac{1}{k}}m_{j}^{1-\frac{1}{k}}+d_1, \theta_j\right\}\]
Thus \[r_i(\mathcal{T})\leq \min_{1\leq j\leq n}\max\left\{m_1^{\frac{1}{k}}m_{j}^{1-\frac{1}{k}}+d_1, \theta_j\right\}.\]
Now the result follows from   Lemma~\ref{Tensor2}.
\end{proof}


If we  take $U=\mbox{diag} (d_1,\ldots,d_{n-1}, yd_n)$ 
with $y=\left(\frac{m_n}{m_j}\right)^\frac{1}{k}$ for an arbitrary  fixed $j$ in the above proof, then we have
\[\mu(\mathcal{G})\ge \max_{1\leq j\leq n}\min\left\{m_n^{\frac{1}{k}}m_{j}^{1-\frac{1}{k}}+d_n, \gamma_j\right\},
\]
where $\gamma_j =\min\left\{ m_n^{\frac{1}{k}}m_{i}m_j^{-\frac{1}{k}}+d_i :2\leq i\leq n\right\}$ for $1\leq j\leq n$.

Consider $4$-uniform hypergraph $\mathcal{G}_1$ with vertex set $[25]$ and edge set $E(\mathcal{G}_1)=\{e_1,\ldots,e_{14}\}$, where \begin{eqnarray*}
\begin{matrix}
e_1=\{1,2,3,4\}, & e_2=\{5,6,7,8\}, & e_3=\{9,10,11,12\},\\
e_4=\{13,14,15,16\}, &
e_5=\{17,18,19,20\}, & e_6=\{21,22,23,24\}, \\
e_7=\{1,2,3,25\}, & e_8=\{4,5,6,25\}, &
e_9=\{7,8,9,25\},\\
 e_{10}=\{10,11,12,25\}, & e_{11}=\{13,14,15,25\}, & e_{12}=\{16,17,18,25\}\\
e_{13}=\{19,20,21,25\}, & e_{14}=\{22,23,24,25\}.
\end{matrix}
\end{eqnarray*}
In notation of Theorem~\ref{Tensor7}, we have
\[
d_1=\dots=d_{24}=2, d_{25}=8,
\]
and
\[
m_1=\dots=m_{24}=5, m_{25}= 0.125,
\]
implying that $\theta_1=\dots=\theta_{24}=14.5743$, $\theta_{25}\approx 8.31436$,  and $m_1^{\frac{1}{4}}m_j^{\frac{3}{4}}+d_1=7$ for $1\leq j\leq 24$ and $m_1^{\frac{1}{4}}m_{25}^{\frac{3}{4}}+d_1\approx 2.31436$. Thus $\mu(\mathcal{G}_1)\leq 8.125$.
Note that $8 =d_1>d_2=\dots=d_{25}=2$ in notation of Theorem \ref{Mo} and  that $h(d^*)=d_{1}+d_{1}\left(\frac{1}{d^*}\right)^{k-1}$ is a decreasing function for $d^*\in((\frac{d_{1}}{d_{2}})^{\frac{1}{k}},\frac{d_{1}}{d_{2}})$. Then
\[
d_{1}+d_{1}\left(\frac{1}{d^{\ast}}\right)^{k-1}
> d_{1}+d_{1}\left(\frac{d_2}{d_1}\right)^{k-1}
= d_{1}+\frac{d_2^{k-1}}{d_1^{k-2}}
= 8+\frac{2^3}{8^2}
=8.125.
\]
For $\mathcal{G}_1$, the upper bound in (\ref{Final}) is smaller than the one in (\ref{MZh}). Obviously,
the blow-up hypergraph of a regular $(k-1)$-uniform hypergraph on $n-1$ vertices, the upper bound in (\ref{MZh}) is smaller than the one in (\ref{Final}).

For a $k$-uniform hypergraph $\mathcal{G}$,
let $d_1\geq \dots\geq d_n$ be the degree sequence of $\mathcal{G}$ and $m_1,\dots, m_n$ be the average $2$-degrees of $\mathcal{G}$. In~\cite{YZL}, the following upper bounds for $\mu(\mathcal{G})$ are given.
\begin{eqnarray}\label{TT2}
\mu(\mathcal{G})\leq \max_{e\in E(\mathcal{G})}\max_{\{i,j\}\in e}(d_i+d_j),
\end{eqnarray}
\begin{eqnarray}\label{TT3}
\mu(\mathcal{G})\leq \max_{e\in E(\mathcal{G})}\max_{\{i,j\}\in e}\frac{d_i+d_j+\sqrt{(d_i-d_j)^2+4m_im_j}}{2}.
\end{eqnarray}


Consider $3$-uniform hypergraph $\mathcal{G}_2$ with vertex set $[9]$ and edge set $E(\mathcal{G}_2)=\{e_1,\ldots,e_4\}$, where
\[
e_1=\{1,2,9\}, e_2=\{3,4,8\},e_3=\{5,6,7\},e_4=\{7,8,9\}.
\]
In notation of Theorem~\ref{Tensor7}, we have
\[
d_1=\dots=d_6=1, d_7=d_8=d_9=2,
\]
and
\[
m_1=\dots =m_6=2, m_7=m_8=m_9=\frac{5}{4},
\]
implying that $\theta_1=\dots=\theta_6=3.25$, $\theta_7=\theta_8=\theta_9\approx3.462$, and $m_1^{\frac{1}{3}}m_j^{\frac{2}{3}}+d_1=3$ when $1\leq j\leq 6$ and $m_1^{\frac{1}{3}}m_j^{\frac{2}{3}}+d_1\approx2.462$ when $7\leq j\leq 9$, and thus $\mu(\mathcal{G}_2)\leq 3.25$.
By direct calculation, the bounds in (\ref{TT2}) and (\ref{TT3}) are  $4$ and $3.25$, respectively.
%
%
For $\mathcal{G}_2$, the upper bound in (\ref{Final}) is smaller than the upper bound in (\ref{TT2}).

Consider $4$-uniform hypergraph $\mathcal{G}_3$ with vertex set $[7]$ and edge set $E(\mathcal{G}_3)=\{e_1,\ldots,e_{8}\}$, where \begin{eqnarray*}
\begin{matrix}
e_1=\{1,2,3,4\}, & e_2=\{1,5,6,7\}, & e_3=\{2,3,4,5\}, \\
e_4=\{3,4,5,6\}, & e_5=\{4,5,6,7\}, & e_6=\{5,6,7,2\}, \\
e_7=\{6,7,2,3\}, & e_8=\{7,2,3,4\}.
\end{matrix}
\end{eqnarray*}
In notation of Theorem~\ref{Tensor7}, we have
\[
d_1=2, d_2=\dots=d_7=5,
\]
and
\[
m_1=31.25, m_2=\dots=m_7=4.4,
\]
implying that $\theta_1=9.4$, and $\theta_2=\dots=\theta_7\approx 12.18294$, and $m_1^{\frac{1}{4}}m_j^{\frac{3}{4}}+d_1=33.25$ for  $j=1$ and $m_1^{\frac{1}{4}}m_j^{\frac{3}{4}}+d_1\approx9.18294$ for $2\leq j\leq 7$. Thus $\mu(\mathcal{G}_3)\leq 12.18294$. It is easily seen that  the bounds in (\ref{TT2}) and (\ref{TT3}) are $10$ and $15.32159$, respectively.
For $\mathcal{G}_3$, the upper bound in Theorem~\ref{Tensor7} is smaller than the one in  (\ref{TT3}) but larger than the one in  (\ref{TT2}). 

\vspace{5mm}

\noindent {\bf Acknowledgement.}  We thank the referees for their valuable and constructive comments and suggestions.

\end{document}